\DeclareMathAlphabet{\mathcal}{OMS}{cmsy}{m}{n} % Reverts mathcal to a legible cal.
     \theoremstyle{plain}
      \newtheorem{theorem}{Theorem}[section]
      \newtheorem*{theorem*}{Theorem}
       \newtheorem*{theoremA}{Theorem~\ref{thm:main}}
       \newtheorem*{theoremAdual}{Theorem~\ref{thm:dualnorm}}
       \newtheorem*{theoremB}{Theorem~\ref{thm:twistfamily}}
      \newtheorem{corollary}[theorem]{Corollary}
      \newtheorem{lemma}[theorem]{Lemma}
 \theoremstyle{definition}
      \newtheorem{remark}[theorem]{Remark}
    \newtheorem{question}[theorem]{Question}
      \newcommand{\R}{{\mathbb R}}
      \newcommand{\Z}{\mathbb Z}
	\newcommand{\N}{\mathbb N}
	\newcommand{\Q}{\mathbb Q}
	\newcommand{\calR}{\mathcal R}
      \newcommand{\nil}{\varnothing}
	\newcommand{\defn}[1]{\textbf{{#1}}}
	\newcommand{\bdry}{\partial}
	\newcommand{\boundary}{\partial}
	\newcommand{\st}[1]{\operatorname{st}}
	\newcommand{\nbhd}{\mathcal{N}}
	\newcommand{\wihat}[1]{\widehat{#1}}
	\renewcommand{\hat}[1]{\widehat{{#1}}}	
	\newcommand{\cut}{\setminus}
	\newcommand{\lk}{{{\ell k}}}
	\newcommand{\co}{\mskip0.5mu\colon\thinspace} 
	\newcommand{\wind}{{\rm{wind}}}
	\newcommand{\wrap}{{\rm{wrap}}}
	\newcommand{\im}{\operatorname{im}}
\newcommand{\spacing}{	  
\parskip 6.6pt
\parindent 0pt
}
   \newcommand{\marka}{\ensuremath{*}}
   \newcommand{\markb}{\ensuremath{\dagger}}
   \newcommand{\markc}{\ensuremath{\ddagger}}
   \newcommand{\markd}{\ensuremath{\circledast}}
\begin{document}

   % title

   \title[]{Dehn filling and the Thurston norm}

\author{Kenneth L.\ Baker and Scott A.\ Taylor}

 \begin{abstract}
For a  compact, orientable, irreducible $3$--manifold with toroidal boundary that is not the product of a torus and an interval or a cable space, each boundary torus has a finite set of slopes such that, if avoided, the Thurston norm of a Dehn filling behaves predictably. More precisely, for all but finitely many slopes,  the Thurston norm of a class in the second homology of the filled manifold  plus the so-called winding norm of the class will be equal to the Thurston norm of the corresponding class in the second homology of the unfilled manifold. This generalizes a result of Sela and is used to answer a question of Baker-Motegi concerning the Seifert genus of knots obtained by twisting a given initial knot along an unknot which links it.
     \end{abstract}

   % today's date, or fill in whatever date you prefer
   \date{\today}
\thanks{}
% This ends the top matter information.
% We can now tell LaTeX to display the top matter.

   \maketitle
% The body  

%\tableofcontents

\section{Introduction}
How does the Thurston norm behave under Dehn filling?

Let $N$ be a compact, orientable $3$--manifold with toroidal boundary and let $T \subset \boundary N$ be a particular component. Consider the Dehn fillings $N_T(b)$ along slopes $b$ in $T$.  For each slope $b$ in $T$, the Dehn filling induces a natural inclusion of $N$ into $N_T(b)$ that induces the monomorphism
	\[ \iota_b \colon H_2(N, \bdry N - T) \to H_2(N_T(b),\bdry N_T(b))\]
defined as follows.   If $z \in H_2(N, \bdry N - T)$ is represented by a properly embedded surface $S$ in $N$ with $\bdry S \cap T = \nil$, then $\iota_b(z)=\hat{z}$ is also represented by $S$ under the inclusion. Consequently,  
\[x(z) \geq x(\hat{z})  \tag{\marka}\] 
on the Thurston norms of classes $z \in H_2(N, \bdry N - T)$ and $\iota_b(z)=\hat{z} \in H_2(N_T(b), \bdry N_T(b))$.  

Gabai and Sela both address when Inequality (\marka) is an equality. Gabai shows that for a fixed class $z \in H_2(N, \bdry N - T)$, $x(z) = x(\hat{z})$ for all except at most one slope $b$ in $T$ \cite[Corollary 2.4]{G2}. Sela extends this result showing that the equality $x(z) = x(\hat{z})$ holds for every class $z \in H_2(N, \bdry N-T)$ and induced class $\hat{z} \in H_2(N_T(b), \bdry N_T(b))$ for all Dehn fillings except along a finite number of slopes $b$ in $T$ \cite[Theorem 3]{Sela}\footnote{Sela uses \cite[Theorem 1.8]{G2} which required an atoroidality hypothesis.  However \cite[Corollary 2.4]{G2} can be used instead to avoid such an additional hypothesis.  Lackenby discusses such atoroidality hypotheses in the Appendix to \cite{LackenbyDSOK}.}.

In this article we extend consideration to all classes in $H_2(N, \bdry N)$.  To do so, for each slope $b$ in $T$  we consider the restriction of the Dehn filling $N_T(b)$  to $N$ rather than the inclusion of $N$ into $N_T(b)$.    Restriction gives a monomorphism 
	\[\rho_b \colon H_2(N_T(b), \bdry N_T(b)) \to  H_2(N, \bdry N)\]
defined as follows.    If $\hat{z} \in H_2(N_T(b), \bdry N_T(b))$ is represented by a properly embedded surface $\hat{S}$ that is transverse to $K_b$, then  $\rho_b(\hat{z}) = z$ is represented by $S = \hat{S} \cap N$.  Here, and throughout, we take $K_b \subset N_T(b)$ to be the core of the filling with tubular neighborhood $\nbhd(K_b)$ so that $N = N_T(b) - \nbhd(K_b)$, and we orient $K_b$ and its meridian $b$ so that $b$ links $K_b$ positively.  The algebraic intersection number with the core $K_b$ is a linear form on homology, so its absolute value is a pseudo-norm.  That is,  the pseudo-norm \defn{winding number} of $K_b$ about a homology class $\hat{z} \in H_2(N_T(b), \bdry N_T(b))$ is defined to be
 \[\wind_{K_b} ( \hat{z}) = |[K_b] \cdot \hat{z}|.\]

The winding number enables the following extension of Inequality (\marka), whose proof is given in Section \ref{sec:Thurstonnormintro}.  
\begin{lemma}\label{lem:windineq}
	Let $N$ be a compact, orientable, irreducible $3$--manifold whose boundary is a union of tori.  Let $T$ be a component of $\bdry N$ and let $b$ be a slope in $T$.
If $N_T(b)$ has no $S^1 \times D^2$ or $S^1 \times S^2$ summands, then for all classes  $\hat{z} \in H_2(N_T(b), \bdry N_T(b))$,   
\[ x(z) \geq  x(\hat{z}) + \wind_{K_b}(\hat{z}) \tag{\markb}\]
where $\rho_b(\hat{z}) = z$.
\end{lemma}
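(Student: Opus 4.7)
The plan is to begin with a Thurston-norm minimizing surface $S$ representing $z$, arrange its boundary on $T$ to consist of $w$ coherently oriented parallel copies of $b$, cap these off by meridian disks of $K_b$ in the filling solid torus to obtain a surface $\hat{S} \subset N_T(b)$ representing $\hat{z}$, and compare Euler characteristics, using the hypotheses on $N_T(b)$ to rule out degenerate components of $\hat{S}$.

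For any representative $\hat{S}'$ of $\hat{z}$ transverse to $K_b$, the restriction $\hat{S}' \cap N$ represents $z = \rho_b(\hat{z})$, so $\bdry z \in H_1(\bdry N)$ has component $\pm w [b]$ on $T$, where $w = \wind_{K_b}(\hat{z})$. A standard boundary-normalization argument for norm-minimizing surfaces on toroidal boundary components, together with the irreducibility of $N$, lets us choose a norm-minimizing $S$ representing $z$ with $\bdry S \cap T$ consisting of exactly $w$ coherently oriented parallel copies of $b$, and having neither sphere nor disk components.

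Cap off the $w$ boundary circles of $S$ on $T$ using the meridian disks of $K_b$ in the filling solid torus, producing a properly embedded surface $\hat{S} \subset N_T(b)$. Injectivity of $\rho_b$ together with $\rho_b([\hat{S}]) = [S] = z$ yields $[\hat{S}] = \hat{z}$, while additivity of Euler characteristic gives $\chi(\hat{S}) = \chi(S) + w$. Granting for the moment that $\hat{S}$ has no sphere or disk components, we conclude
\[
x(\hat{z}) \leq x^-(\hat{S}) = -\chi(\hat{S}) = -\chi(S) - w = x^-(S) - w = x(z) - w,
\]
which rearranges to inequality (\markb).

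The main obstacle is ruling out the degenerate components of $\hat{S}$ via the hypotheses. A sphere component of $\hat{S}$ must contain at least one of the added meridian disks (since $S$ has no sphere components), and by the coherent choice of orientation, its algebraic intersection with $K_b$ is nonzero; since any separating sphere has zero algebraic intersection with any loop, this sphere is non-separating, producing an $S^1 \times S^2$ prime summand of $N_T(b)$ and contradicting the hypothesis. A disk component $\hat{D}$ of $\hat{S}$ has its one boundary circle on some torus $T' \subset \bdry N - T$: if essential on $T'$, then $\hat{D}$ is a compressing disk for $T'$, yielding an $S^1 \times D^2$ boundary-summand and contradicting the hypothesis; if inessential, capping off $\hat{D}$ by a disk on $T'$ produces a sphere whose algebraic intersection with $K_b$ is nonzero (inherited from the meridian disks contained in $\hat{D}$), reducing to the previous non-separating sphere case.
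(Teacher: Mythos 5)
Your proposal is correct and follows essentially the same route as the paper: normalize a norm-minimizing representative of $z$ so that its boundary on $T$ is $\wind_{K_b}(\hat{z})$ coherently oriented copies of $b$, cap off with meridian disks, compare Euler characteristics, and use the absence of $S^1\times S^2$ and $S^1\times D^2$ summands to exclude sphere and disc components of the capped-off surface. The only (harmless) variation is that you certify non-separation of the capped-off degenerate components via their nonzero algebraic intersection with $K_b$, whereas the paper deduces it from having discarded the separating components of $S$ at the outset.
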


Our main goal in this paper is to address when Inequality (\markb) is an equality, i.e. when
\[x(z) = x(\hat{z}) + \wind_{K_b}(\hat{z}) \tag{\markc}.\]
 
 For convenience, if there exists a class $\hat{z} \in H_2(N_T(b), \bdry N_T(b))$ for which Equality (\markc) fails,  then we say the slope $b$ is a \defn{norm-reducing} slope, the class $z = \rho_b(\hat{z}) \in H_2(N, \bdry N)$ is a \defn{norm-reducing} class with respect to the norm-reducing slope $b$, and  the class $\hat{z} \in H_2(N_T(b), \bdry N_T(b))$ is a \defn{norm-reducing} class with  respect to the knot $K_b$.  

\begin{theoremA}
Let $N$ be a compact, connected, orientable, irreducible $3$--manifold whose boundary is a union of tori. Then either 
\begin{enumerate}
\item $N$ is a product of a torus and an interval,
\item $N$ is a cable space, or 
\item for each torus component $T \subset \bdry N$ there is a finite set of slopes $\calR=\calR(N,T)$ in $T$ such that if $b \not \in \calR$ then $b$ is not norm-reducing.
 \end{enumerate}
\end{theoremA}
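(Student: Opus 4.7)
The plan is to reduce the problem to a finite check via Thurston's polyhedrality of the norm ball, and then to verify Equality (\markc) by a capping-off argument for classes of nonzero winding number combined with Sela's theorem for classes of zero winding number. Assuming $N$ is neither $T^2 \times I$ nor a cable space, Thurston's theorem gives that the unit ball $B_x \subset H_2(N, \bdry N; \R)$ is a finite-sided rational polytope, and by Gabai's sutured manifold theory each top-dimensional face $F$ of $B_x$ has an associated boundary slope $b_F$ on $T$ (possibly the \emph{zero slope}, meaning no boundary on $T$) such that norm-minimizing representatives of classes in the open cone $\R_+ F$ all exhibit this slope. I would define $\calR$ as the union of the finite set of nonzero $b_F$'s, the finite set arising from Sela's theorem for classes of zero winding number, and the finite set of topologically exceptional slopes along which $N_T(b)$ acquires an $S^1 \times D^2$ or $S^1 \times S^2$ summand (finitely many by Culler-Gordon-Luecke-Shalen and related work).

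Fix $b \notin \calR$ and $\hat{z} \in H_2(N_T(b), \bdry N_T(b))$. When $\wind_{K_b}(\hat{z}) = 0$, the class $\rho_b(\hat{z})$ lies in $H_2(N, \bdry N - T)$ and Sela's theorem directly yields (\markc). When $\wind_{K_b}(\hat{z}) \neq 0$, set $z = \rho_b(\hat{z})$ and note that the image of $\bdry z$ in $H_1(T)$ is $\wind_{K_b}(\hat{z}) \cdot b$, so a norm-minimizing representative $S$ of $z$ may be chosen with $\bdry S \cap T$ consisting of exactly $\wind_{K_b}(\hat{z})$ parallel essential curves of slope $b$ (after a standard tautening). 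Capping these off with meridian disks of the filling solid torus produces a surface $\hat{S} \subset N_T(b)$ representing $\hat{z}$ with $x(\hat{S}) = x(S) - \wind_{K_b}(\hat{z})$, provided no components of $\hat{S}$ are disks or spheres; this should follow from $b \notin \calR$ together with the tautness of $S$. Combined with Lemma~\ref{lem:windineq}, this gives $x(z) = x(\hat{z}) + \wind_{K_b}(\hat{z})$.

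The main obstacle is showing that the capped-off $\hat{S}$ is actually norm-minimizing for $\hat{z}$. The danger is that a more efficient representative $\hat{S}'$ of $\hat{z}$ might exist in $N_T(b)$, arising via compressions or isotopies that use the filling solid torus nontrivially. If such an $\hat{S}'$ could always be chosen with coherent intersection with $K_b$ (geometric intersection equal to algebraic), then restricting it to $N$ would yield a surface representing $z$ with Thurston norm at most $x(\hat{S}') + \wind_{K_b}(\hat{z}) < x(S)$, contradicting the norm-minimality of $S$. The crux is therefore to either guarantee coherent intersection, or to rule out such $\hat{S}'$ directly. The polyhedral face structure is essential here: the finite set $\{b_F\}$ captures precisely the slopes at which norm-minimizing surfaces in $N_T(b)$ can be forced into incoherent intersection with $K_b$. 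Completing this step should require a sutured manifold hierarchy adapted to $(N, S)$, or a thin-position argument for $K_b$ relative to a norm-minimizing $\hat{S}'$; this is the technical core of the proof, and the exclusion of $T^2 \times I$ and cable spaces reflects that no such analysis can succeed in those cases, where the Thurston norm degenerates and infinitely many slopes become norm-reducing.
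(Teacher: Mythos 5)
Your overall architecture (handle zero-winding classes with Sela, handle nonzero-winding classes by a capping argument, collect the bad slopes into a finite set $\calR$) matches the paper's in outline, but there is a genuine gap at exactly the point you flag as ``the technical core,'' and the finiteness mechanism you propose in its place does not work. First, the claim that each top-dimensional face $F$ of the unit ball of $H_2(N,\bdry N;\R)$ carries a single associated boundary slope $b_F$ on $T$ is unjustified: the boundary class $(\bdry_T)_*z \in H_1(T;\R)$ varies linearly over a face, so a single face generically contains classes whose taut representatives have many different slopes on $T$. This is precisely why Sela's face-by-face application of Gabai's theorem is confined to the subspace $H_2(N,\bdry N - T)$, where the boundary on $T$ vanishes; it does not extend to all of $H_2(N,\bdry N)$, so your set $\{b_F\}$ is not well defined and cannot ``capture the slopes at which norm-minimizing surfaces are forced into incoherent intersection with $K_b$.'' Second, the deduction in your second paragraph is a non sequitur: capping off a taut representative of $z$ yields $x(\hat{z})\leq x(z)-\wind_{K_b}(\hat{z})$, which is the \emph{same} inequality as Lemma~\ref{lem:windineq}, not its reverse, so combining the two does not give Equality (\markc). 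The missing direction is exactly the statement that some taut representative of $\hat{z}$ meets $K_b$ coherently, i.e.\ $\wrap_{K_b}(\hat{z})=\wind_{K_b}(\hat{z})$, so that $\hat{z}$ is not exceptional; you correctly identify this but then defer it to an unspecified sutured-manifold hierarchy or thin-position argument.

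That deferred step is the entire content of the paper's argument, and it proceeds by a different finiteness mechanism than polyhedrality. Via an adaptation of Taylor's sutured-manifold theorem (Theorem~\ref{Previous Sutured Thm}), the paper shows (Theorem~\ref{thm:BoundingThurstonNorm} and Corollary~\ref{cor:twofillings}) that if $N_T(b)$ is irreducible, $\bdry$--irreducible, and has an exceptional class with respect to $K_b$, then for every other slope $a$ whose filling is irreducible and $\bdry$--irreducible and admits a class $\hat{y}$ with $\wrap_{K_a}(\hat{y})>0$, one has $\Delta(a,b)\leq 1+x(\hat{y})/\wrap_{K_a}(\hat{y})$. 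Two such reference slopes $a_1,a_2$ with $\Delta(a_1,a_2)>0$ then confine $b$ to a finite set by a lattice-point count (Lemma~\ref{lem:boudinglatticepoints}); the case where such reference slopes are unavailable is exactly the case where $(\bdry_T)_*$ has rank at most one in $H_1(T)$, in which all but one slope has zero winding and Sela's theorem finishes. Reducible and $\bdry$--reducible fillings are excluded by Gordon--Luecke/Hoffman--Matignon and by CGLS, the latter being where $T^2\times I$ and cable spaces enter (not via degeneration of the norm). Without some substitute for this distance bound, your proof does not close.
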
 

In Corollary~\ref{cor:boundsondegenerateslopes} we obtain a bound on the size of $\calR(N,T)$ in terms of the Thurston norms of two integral classes of two different fillings and the distance between the two filling slopes. Since $\wind_{K_b}(\hat{z}) = 0$ when $\rho_b(\hat{z}) \in H_2(N, \bdry N-T)$, Theorem \ref{thm:main} generalizes Sela's result (with the additional assumption that $N$ is irreducible). Sela also explicitly bounds, by the number of faces of the Thurston norm ball of $H_2(N, \bdry N-T)$, the number of slopes $b$ for which Equation (\markc) may fail for classes $z=\rho_b(\hat{z}) \in H_2(N, \bdry N-T)$  when $\wind_{K_b}(\hat{z}) = 0$. We appeal to his result to handle the classes in $H_2(N, \boundary N - T)$.

In the same vein as Gabai's and Sela's results, Lackenby \cite[Theorem 1.4b]{Lackenby} (under additional hypotheses and a change of notation\footnote{In Lackenby's paper, see Assumptions 1.1 and Remark 1.3. To convert the notation from ours to Lackenby's make the following changes: $\gamma =\nil$, $M' \to M$, $K_a \to L$, $N \to M - \operatorname{int}(N(L))$, $\hat{Q} \to F$. The class whose norm is reduced is called $z_1$ by Lackenby.
})  showed that if $\hat{Q}$ is a compact connected surface in $M' = N_T(a)$ which cannot be isotoped to be disjoint from $K_a$ and if there is a norm-reducing class under a filling of slope $b$ with $\Delta=\Delta(a,b) \geq 2$, then $\hat{Q}$ can be isotoped so that
\[
|K_a \cap \hat{Q}|(\Delta - 1) \leq -\chi(\hat{Q}).
\]
If, in Lackenby's setup, $\hat{Q}$ is taken to be a taut representative of a non-zero class $\hat{y} \in H_2(M', \boundary M')$, then we have (after rearranging the inequality):
\[
\Delta \leq 1 + \frac{x(\hat{y})}{|K_a \cap \hat{Q}|}.
\]
Our Corollary \ref{cor:twofillings}, gives a version of this result for the situation when $H_2(N,\boundary N)$, and not just $H_2(N, \boundary N  - T)$, has a norm-degenerating class with respect to the slope $b$.

In addition to considering a fixed component $T$ of $\boundary N$ and studying the dependency of the Thurston norm on the filling slope, we can also consider a 3-manifold $M$ and consider how the Thurston norm of manifolds $M'$ obtained by surgery on an oriented knot $K$ in $M$ depends on the dual Thurston norm $x^*([K])$ of the class $\alpha = [K] \in H_1(M;\Z)$.

\begin{theoremAdual}
Let $M$ be a compact, orientable $3$--manifold whose boundary is a union of tori, $\Delta \in \N$, and $\alpha \in H_1(M;\Z)$. Assume that every sphere, disc, annulus, and torus in $M$ separates. If 
\[ (\Delta -1) x^*(\alpha) >1 \]
then every irreducible, $\bdry$--irreducible $3$--manifold obtained by a Dehn surgery of distance $\Delta$ on a knot $K$ representing $\alpha$ has no norm-reducing classes with respect to the knot which is surgery dual to $K$.
\end{theoremAdual}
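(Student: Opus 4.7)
The plan is to argue by contradiction. Suppose $M' = N_T(b)$ is an irreducible, $\bdry$-irreducible manifold obtained by Dehn surgery of distance $\Delta$ on a knot $K \subset M$ representing $\alpha$, where $N = M \setminus \nbhd(K)$, $T = \bdry \nbhd(K)$, and $\mu$ is the meridian of $K$ (so $M = N_T(\mu)$). Suppose further that there is a norm-reducing class $\hat{z} \in H_2(M', \bdry M')$ with respect to the surgery dual knot $K_b$. The goal is to derive $(\Delta - 1)\,x^*(\alpha) \leq 1$, contradicting the hypothesis.

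The preliminary step is to verify that the hypotheses on $M$ (every sphere, disc, annulus, torus in $M$ separates) together with the irreducibility and $\bdry$-irreducibility of $M'$ ensure that $N$ satisfies the topological conditions required by Corollary \ref{cor:twofillings}: compact, orientable, irreducible, $\bdry$-irreducible, with toroidal boundary, and neither a product of a torus and an interval nor a cable space. Any essential sphere, disc, annulus, or torus in $N$ would restrict to a corresponding surface in $M$ or $M'$ violating the separation or irreducibility assumptions.

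The main step applies Corollary \ref{cor:twofillings} to the two fillings $\mu$ and $b$ of $N$. Since $b$ is norm-reducing (by hypothesis), for any integral class $\hat{y} \in H_2(M, \bdry M)$ with $\wind_K(\hat{y}) > 0$, the corollary --- applied to a taut representative $\hat{Q}$ of $\hat{y}$ (which cannot be isotoped disjoint from $K$, since the algebraic intersection is nonzero) --- yields
\[
(\Delta - 1)\,\wind_K(\hat{y}) \;\leq\; (\Delta - 1)\,|K \cap \hat{Q}| \;\leq\; -\chi(\hat{Q}) \;=\; x(\hat{y}).
\]
By homogeneity this extends to all rational classes, and by continuity of $x$ to all real classes $\hat{y}$ with $x(\hat{y}) > 0$. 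Taking the supremum over such $\hat{y}$ gives
\[
(\Delta - 1)\,x^*(\alpha) \;=\; \sup_{\hat{y}}\, \frac{(\Delta - 1)\,|\alpha \cdot \hat{y}|}{x(\hat{y})} \;\leq\; 1,
\]
the required contradiction.

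The main obstacle I anticipate is the preliminary topological reduction: checking carefully that the separation hypotheses on $M$, combined with the irreducibility of $M'$, really do preclude $N$ from being reducible, $\bdry$-reducible, toroidal in a bad way, or a cable space, so that the machinery of Section \ref{sec:Thurstonnormintro} applies. A secondary subtlety is uniformity of the two-filling bound: one must use a \emph{taut} representative $\hat{Q}$ so that $-\chi(\hat{Q}) = x(\hat{y})$, and rely on $|K \cap \hat{Q}| \geq \wind_K(\hat{y})$ to get a bound in terms of winding. Once these points are handled, the passage to the supremum defining $x^*(\alpha)$ is straightforward.
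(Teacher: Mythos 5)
Your overall strategy is sound and, at bottom, rests on the same engine as the paper's proof (Taylor's sutured manifold result, Theorem~\ref{Previous Sutured Thm}), but you organize it differently: you argue the contrapositive via Corollary~\ref{cor:twofillings}, obtaining $(\Delta-1)\,|\alpha\cdot\hat{y}|\leq(\Delta-1)\wrap_{K}(\hat{y})\leq x(\hat{y})$ for \emph{every} class $\hat{y}$ and then bounding the supremum defining $x^*(\alpha)$ from above. The paper instead argues directly: it uses compactness of the unit norm ball (which is where the separation hypotheses are used to make $x$ an honest norm) and a rational approximation to produce a \emph{single} integral class $\hat{z}$ and taut representative $\hat{Q}$ with $-\chi(\hat{Q})<|\hat{Q}\cap K'|(\Delta-1)$, checks via Lemma~\ref{no gabai disc} that $\hat{Q}$ has no multi-$\bdry$-compressing disc, and feeds $\hat{Q}$ into Theorem~\ref{Previous Sutured Thm}. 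Your contrapositive route is genuinely cleaner on the analytic side: since you need an upper bound on the supremum rather than a class nearly attaining it, you avoid the compactness and $\epsilon$-approximation argument entirely.

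There is, however, a concrete gap in your ``preliminary step,'' and it is not merely a formality. Corollary~\ref{cor:twofillings} requires \emph{both} filled manifolds to be irreducible and $\bdry$--irreducible, but the theorem only assumes that every sphere, disc, annulus, and torus in the ambient manifold (your $M$) \emph{separates}. That hypothesis does not imply irreducibility or $\bdry$--irreducibility: a connected sum of two irreducible pieces has every sphere separating yet is reducible, and a separating essential disc is likewise permitted. So your claim that ``any essential sphere, disc, annulus, or torus in $N$ would restrict to a corresponding surface in $M$ or $M'$ violating the separation or irreducibility assumptions'' does not go through --- a separating essential sphere or disc in $M$ violates nothing. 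The fix is to bypass Corollary~\ref{cor:twofillings} and invoke Theorem~\ref{Previous Sutured Thm} (or the second bullet of Theorem~\ref{thm:BoundingThurstonNorm}) directly: for a taut representative $\hat{Q}$ of $\hat{y}$ minimizing $|\hat{Q}\cap K|$, the separation hypothesis is exactly what Lemma~\ref{no gabai disc} needs to rule out multi-$\bdry$-compressing discs (no lens space summand, no non-separating sphere or disc), which is the only role the ambient manifold's topology plays in the paper's argument. With that substitution --- and retaining your contrapositive bookkeeping for the supremum --- your proof is correct.
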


The contrapositive is also a useful formulation, as it shows that knots resulting from non-longitudinal surgery on a knot with a norm-reducing class have bounded dual norm.

Finally, we give an application to the genus of knots in twist families. A {\em twist family} of knots $\{K_n\}$ is obtained by performing $-1/n$--Dehn surgery on an unknot $c$ that links a given knot $K=K_0$.   When $\lk(K,c)=0$, it is a fundamental consequence of \cite[Corollary 2.4]{G2} that $g(K_n)$ is constant for all integers $n$ except at most one where the genus decreases.
Using the multivariable Alexander polynomial, the first author and Motegi showed that if $|\lk(K,c)| \geq 2$, then $g(K_n) \to \infty$ as ${n\to \infty}$ \cite{bakermotegi}.  When $|\lk(K,c)|=1$, this fails if $c$ is a meridian of $K$ since $K_n = K$ for all $K$.  Here we answer \cite[Question 2.2]{bakermotegi} by showing this is the only exception.

\begin{theoremB}
If $\omega=|\lk(K,c)|>0$, then $\lim_{n\to \infty} g(K_n) =\infty$ unless $c$ is a meridian of $K$.
\end{theoremB}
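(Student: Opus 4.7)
The plan is to apply Theorem~\ref{thm:main} to the link exterior $N = S^3 \setminus \nu(K \cup c)$ with $T = T_c$, the boundary torus around $c$. Since $\omega > 0$ the link $K \cup c$ is non-split, so $N$ is irreducible. The case $N = T^2 \times I$ corresponds to the Hopf link, for which $c$ is a meridian of $K$ and is therefore excluded by hypothesis; the cable-space case will be dispatched directly at the end. So assume $N$ is neither of these. For each positive $n$ the slope $b_n = -1/n$ on $T_c$ produces $N_{T_c}(b_n) = E(K_n)$; the core $K_{b_n}$ of the filling links $K_n$ algebraically $\omega$ times (linking numbers are preserved under Rolfsen twists), so for the Seifert surface generator $\hat{z}_n$ of $H_2(E(K_n), \partial E(K_n))$ one has $\wind_{K_{b_n}}(\hat{z}_n) = \omega$ and $x(\hat{z}_n) = \max(2g(K_n) - 1, 0)$. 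Write $z_n = \rho_{b_n}(\hat{z}_n) \in H_2(N, \partial N)$.

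Theorem~\ref{thm:main} supplies a finite set $\calR = \calR(N, T_c)$ of slopes on $T_c$ outside of which Equality~(\markc) holds for every class. Since the $b_n$ are pairwise distinct, $b_n \not\in \calR$ for all but finitely many $n$, and so
\[ x(z_n) \;=\; x(\hat{z}_n) + \omega \;=\; \max(2g(K_n) - 1, 0) + \omega \]
for all large $n$. Thus the theorem reduces to showing $x(z_n) \to \infty$.

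Choose a basis $\{\xi_K, \xi_c\}$ of $H_2(N, \partial N) \cong \Z^2$ realized by the intersections with $N$ of a Seifert surface for $K$ and a Seifert disk for $c$ in $S^3$, so that $\partial\xi_K = \lambda_K - \omega\mu_c$ and $\partial\xi_c = \lambda_c - \omega\mu_K$. A direct boundary computation --- using $\partial z_n|_{T_c} = \omega b_n$ and the Rolfsen framing shift $\lambda_{K_n} = \lambda_K - n\omega^2\mu_K$ on $T_K$ --- yields $z_n = \xi_K + n\omega\,\xi_c$. Subadditivity of the Thurston seminorm then gives
\[ x(z_n) \;\geq\; n|\omega|\cdot x(\xi_c) - x(\xi_K), \]
reducing the task further to proving $x(\xi_c) > 0$. \emph{This is where I expect the main obstacle to lie.} The argument I have in mind is that if $x(\xi_c) = 0$, then a taut representative of $\xi_c$ is a disjoint union of spheres, disks, tori, and essential annuli; irreducibility removes the spheres, and neither $\mu_K$ nor $\lambda_c$ bounds a disk in $N$ (since $\omega > 0$), so the $\mu_K$ and $\lambda_c$ parts of $\partial\xi_c$ must be carried by annulus ends. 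Analyzing the possible combinations forces an essential annulus in $N$ from $T_c$ to $T_K$ with one end $\lambda_c$ and the other $\pm\mu_K$ (in particular $\omega = 1$); capping this annulus with a meridional disk of $\nu(K)$ produces a disk in $S^3$ bounded by $c$ meeting $K$ transversely in a single point, i.e., $c$ is a meridian of $K$, contradicting the hypothesis.

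Finally, if $N$ is a cable space then the twist family admits a direct analysis via the Seifert fibered structure: in the typical sub-case $K$ is a non-trivial $(p,q)$-torus knot and the $K_n$ are $(p, q+np)$-torus knots, whose genera $\tfrac{(|p|-1)(|q+np|-1)}{2}$ grow without bound; the dual sub-case, in which $K$ is an unknot and $c$ is a non-trivial cable of its complementary core, is handled by a parallel argument.
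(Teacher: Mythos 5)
Your proposal follows essentially the same route as the paper: invoke Theorem~\ref{thm:main} to get $x(z_n) = x(\hat{z}_n) + \omega$ for all large $n$, use the homological identity $z_n = z_0 + n\omega[D]$ (the paper's Lemma~\ref{lem:homologyoftwists}) to force linear growth of $x(z_n)$, and show that $x([D]) = 0$ forces $c$ to be a meridian of $K$ --- your annulus analysis for this last step is a correct expansion of what the paper states only tersely. Two caveats: the paper proves the statement for null-homologous knots in any irreducible rational homology sphere $Y$ with $H_2(Y)=0$ (hence its extra step disposing of a reducible $N$), whereas you work only in $S^3$; and in the cable-space case the paper notes that the punctured disc $D$ must be a fiber of the fibration over $S^1$, which via its boundary slopes rules out your ``dual sub-case'' entirely and identifies $K_n$ as the $(p,q+np)$--torus knot, so your appeal to an unspecified ``parallel argument'' there is both unnecessary and, as written, unfinished.
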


\section{Preliminaries}

\subsection{Notation and conventions}  The following notation is used throughout the article. 
We take $N$ to be a compact, connected, irreducible oriented $3$--manifold where $\boundary N$ is a non-empty union of tori and focus upon a particular component $T \subset \boundary N$.  Given two slopes $a,b \subset T$, we set the results of Dehn filling $N$ along these slopes to be the two $3$--manifolds $M=N_T(b)$ and $M'=N_T(a)$.  Furthermore we let $K=K_b \subset M$ and $K'=K_a \subset M'$ denote the core knots of the two filling solid tori.   

The \defn{distance} $\Delta = \Delta(a,b)$ between  two slopes $a,b \subset T$ is the minimal number of points of intersection between simple closed curves in $T$ representing $a$ and $b$.

Given a surface $S$ properly embedded in $N$, the union of the boundary components of $S$ in $T$ is $\bdry_T S = \bdry S \cap T$.  If the slope of each component of $\bdry_T S$ in $T$ is $b$ (as an unoriented curve), then we set $\hat{S} \subset M$ to be the surface obtained by capping off the components of $\bdry_T S$ with meridian discs of the filling solid torus. Observe that by construction, 
$|K \cap \hat{S}|=|\bdry_T S|$. 

In this article, a \defn{lens space} is a closed $3$--manifold with a genus $1$ Heegaard splitting other than $S^3$ and $S^1 \times S^2$.  In particular, the fundamental group of a lens space is a non-trivial, finite, cyclic group.

\subsection{Thurston norm}\label{sec:Thurstonnormintro}
Thurston introduced two norms on the homology groups of a compact, orientable $3$--manifold $W$ \cite{Thurston},  now commonly known as the \defn{Thurston norm} and the \defn{dual Thurston norm}:
\[x\colon H_2(W, \boundary W; \R) \to [0, \infty) \quad \mbox{ and }\quad x^* \colon H_1(W; \R) \to [0, \infty)\]
which we may write as $x_W$ and $x^*_W$ to emphasize the $3$--manifold $W$.

On an integral class $\sigma \in H_2(W, \bdry W; \Z)$, 
the Thurston norm is defined  by
\[
x(\sigma) = \min\limits_S \sum_{i = 1}^n \max\{0, -\chi(S_i)\}
\]
where the minimum is taken over all embedded surfaces $S$ representing $\sigma$ with connected components $S_1, \hdots, S_n$. The function $x$ is linear on rays and convex. These properties enable it to be extended first to rational homology classes and then to real homology classes.   

In general, the function $x$ is only a pseudo-norm; $x$ is a norm if $W$ contains no non-separating sphere, disc, torus, or annulus.  Nevertheless, $x$ is reasonably well behaved even in the presence of non-separating tori and annuli, it is non-separating spheres and discs that complicate the norm:
\begin{quotation}
{\em 
If an integral class $\sigma \in H_2(W, \bdry W;\Z)$ cannot be represented by a surface with a non-separating sphere or disc component, then $x(\sigma)$ is just the minimum of $-\chi(S)$ among surfaces representing $\sigma$.
}
\end{quotation}
It is for such integral classes that Inequality (\markb) holds. Assuming $W$ has no $S^1 \times S^2$ or $S^1 \times D^2$ summand ensures this is the case for all classes, as does the more heavy-handed assumption that $W$ is irreducible and $\bdry$--irreducible. In particular, we can now prove Lemma \ref{lem:windineq}.

\begin{proof}[Proof of Lemma \ref{lem:windineq}]
Recall that $N$ is a compact, orientable, irreducible 3--manifold with $\boundary N$ the union of tori and $T \subset \boundary N$ a component. Let $b$ be a slope in $T$ and assume that $N_T(b)$ has no $S^1 \times D^2$ or $S^1 \times S^2$ summands. Let $\boundary_T \co H_2(N, \boundary N) \to H_1(T)$ be the boundary map restricted to $T$. We will show that for all classes $\hat{z} \in H_2(N_T(b), \boundary N_T(b))$,
\[ x(z) \geq  x(\hat{z}) + \wind_{K_b}(\hat{z}) \tag{\markb}.\]

As usual, it suffices to prove the inequality for integral classes. In which case, there exists a properly embedded oriented surface $S \subset N$ such that $S$ has no separating component, $[S] = z$, and all components of $\boundary_T S$ are coherently oriented curves, each of slope $b$, and $x(S) = x(z)$. If some component of $S$ is a sphere or disc, then it would persist into $N_T(b)$ as a non-separating sphere or disc, contrary to our hypotheses. Hence $S$ has no sphere or disc component and $x(S) = -\chi(S)$.

Cap off the components of $\boundary_T(S)$ in $N_T(b)$ with discs to obtain the surface $\hat{S}$. Observe that 
\[ |\boundary_T S| = |\hat{S} \cap K_b| = \wind_{K_b}(\hat{z})\]
 since the components of $\boundary_T S$ are coherently oriented. Since $M$ contains no non-separating sphere or disc, $-\chi(\hat{S}) \geq x(\hat{z})$. Consequently,
\[
x(z) = -\chi(S) = -\chi(\hat{S}) + \wind_{K_b}(\hat{z}) \geq x(\hat{z}) + \wind_{K_b}(\hat{z}).
\]
\end{proof}

Finally, on a class $\alpha \in H_1(W;\R)$, the dual Thurston norm is defined by 
\[
x^*(\alpha) = \sup \limits_{x(\sigma) \leq 1} |\alpha \cdot \sigma|
\]
where $\cdot$ denotes the intersection product. The function $x^*\co H_1(W; \R) \to [0, \infty)$ is continuous.

\subsection{Wrapping numbers}

Having defined the winding number, we now turn to wrapping number. A  compact, oriented, properly embedded surface $S$ in a $3$--manifold $W$ is \defn{taut} (or $\nil$--taut) if it is incompressible (i.e.\ does not admit a compressing disc), and minimizes the Thurston norm among embedded surfaces representing the class $[S, \boundary S] \in H_2(W, \boundary S)$ \cite[Def. 1.2]{S1}.  Observe that if a surface $S \subset N$ is taut and has the property that $x(S) = x([S])$, then the surface $S'$ obtained by discarding all separating components of $S$ (which are necessarily spheres, discs, annuli, and tori) is also taut and has the properties that $[S] = [S'] \in H_2(N,\boundary N)$ and $x(S') = x([S]) = x([S'])$.

We define the \defn{wrapping number} of $K$ about an integral homology class $\hat{z} \in H_2(M, \boundary M;\Z)$ to be 
\[ \wrap_{K}(\hat{z}) = \min_{\hat{S}} |K \cap \hat{S}| \]
where the minimum is taken over all {\em taut} representatives $\hat{S}$ of $\hat{z}$. 

	\begin{quotation}
		{\em 
	Since discarding separating components of $\hat{S}$ will not increase $|K \cap \hat{S}|$, we will henceforth assume that whenever we discuss a taut surface realizing the Thurston norm of a homology class in the second homology group of a 3-manifold relative to the boundary of that 3-manifold, we have discarded all separating components.
		}
	\end{quotation}

We may extend the wrapping number to  $H_2(M, \bdry M;\Q)$.   Assume $\hat{S}$ is a taut surface realizing $\wrap_{K}(\hat{z})$ for an integral class $\hat{z} \in H_2(M, \bdry M;\Z)$.  Then, following \cite[Lemma 1]{Thurston}, $n$ parallel copies of $\hat{S}$ is a taut surface realizing $\wrap_{K}(n\, \hat{z})= n\, \wrap_{K}(\hat{z})$ for positive integers $n$. Thus for a rational class $\hat{q}$ we define $\wrap_{K}(\hat{q}) = \tfrac{1}{n} \wrap_{K}(n\,\hat{q})$ where $n$ is a positive integer such that $n\hat{q}$ is an integral class. Since algebraic intersection numbers give lower bounds for geometric intersection numbers, $\wrap_K(\hat{q}) \geq \wind_K(\hat{q})$ for all $\hat{q} \in H_2(M,\boundary M; \Q)$. Observe that if $M$ has no norm-reducing classes with respect to $K$, then $\wrap_{K} = \wind_{K}$ is a pseudo-norm. However, we believe that, in general, the triangle inequality will not hold for $\wrap_{K}$.
	\begin{question}
		Must the wrapping number satisfy the triangle inequality?
	\end{question}

A class $\hat{z} \in H_2(M, \boundary M)$ is \defn{exceptional} with respect to a knot $K$ \cite{T} if the winding number and wrapping number are not equal; that is $\hat{z}$ is exceptional with respect to $K$ if
\[
\wind_{K}(\hat{z}) < \wrap_{K}(\hat{z}).
\]
This definition takes root in the practical difference between the Thurston norm and Scharlemann's $\beta$--norm.  As discussed in \cite{T}, a class $\hat{z}$ is {\em exceptional} with respect to $K$ if and only if no representative of $\hat{z}$ is both $\nil$--taut and $K$--taut.   (Here, $K$ is playing the role of $\beta$.  See \cite{S1} for the definitions of the $\beta$--norm and $\beta$--taut surfaces.)

For our present purposes, we observe that {\em norm-reducing} classes and {\em exceptional} classes are equivalent in the absence of non-separating spheres and discs.  This allows us to parlay technical results about exceptional classes into results about norm-reduction.
\begin{lemma}\label{lem:exceptionalisnormdegen}
Suppose that $M$ contains no non-separating sphere or disc. Then, with respect to a knot $K$ in $M$, a class $\hat{z} \in H_2(M, \boundary M)$ is exceptional if and only if it is norm-reducing.
\end{lemma}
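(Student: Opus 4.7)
My plan is to prove the equivalence via the contrapositive: $\wind_K(\hat{z}) = \wrap_K(\hat{z})$ if and only if $x(z) = x(\hat{z}) + \wind_K(\hat{z})$. Since $M$ has no non-separating sphere or disc, it has no $S^1 \times D^2$ or $S^1 \times S^2$ summand, so Lemma~\ref{lem:windineq} applies and gives the inequality $x(z) \geq x(\hat{z}) + \wind_K(\hat{z})$ automatically. Therefore only the reverse inequalities in each direction need attention, and both arise by trading taut representatives between $M$ and $N$ via restriction and capping-off.

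For the ``$\Rightarrow$'' direction, I would pick a taut representative $\hat{S}$ of $\hat{z}$ realising $\wrap_K(\hat{z})$, so that $|K \cap \hat{S}| = \wrap_K(\hat{z}) = \wind_K(\hat{z})$. The restriction $S = \hat{S} \cap N$ represents $z = \rho_b(\hat{z})$ and satisfies the elementary Euler-characteristic identity $-\chi(S) = -\chi(\hat{S}) + |K \cap \hat{S}|$. Tautness of $\hat{S}$ (together with discarding of separating sphere/disc components, since they contribute nothing to the Thurston norm) gives $-\chi(\hat{S}) = x(\hat{z})$, hence $x(z) \leq -\chi(S) = x(\hat{z}) + \wind_K(\hat{z})$. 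Combining with Lemma~\ref{lem:windineq} yields the desired equality.

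For the ``$\Leftarrow$'' direction, I would begin with a taut representative $S$ of $z$ in $N$. By standard Thurston-norm theory, the boundary $\bdry_T S$ can be chosen to consist of exactly $\wind_K(\hat{z})$ coherently-oriented parallel essential curves of slope $b$ (or to be empty when $\wind_K(\hat{z}) = 0$), since $\bdry z|_T = \wind_K(\hat{z})\, [b]$. Capping these off with meridian discs of the filling solid torus yields a surface $\hat{S} \subset M$ with $[\hat{S}] = \hat{z}$, $|K \cap \hat{S}| = \wind_K(\hat{z})$, and $-\chi(\hat{S}) = -\chi(S) - \wind_K(\hat{z}) = x(z) - \wind_K(\hat{z}) = x(\hat{z})$. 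If $\hat{S}$ is taut in $M$, then $\wrap_K(\hat{z}) \leq |K \cap \hat{S}| = \wind_K(\hat{z})$, and combining with the always-valid reverse inequality forces the equality $\wind_K(\hat{z}) = \wrap_K(\hat{z})$.

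The main obstacle is verifying tautness of the capped-off $\hat{S}$. For norm-minimality, the coherence of $\bdry_T S$ prevents $\hat{S}$ from containing sphere components, since any such sphere would be null-homologous yet meet $K$ coherently in a positive number of points, contradicting $[K] \cdot [\text{sphere}] = 0$; disc components are similarly controlled using the hypothesis that $M$ has no non-separating disc. For incompressibility, a would-be compressing disc $D$ for $\hat{S}$ in $M$ can be isotoped to minimize $|D \cap K|$; an innermost-point analysis on $D \cap K$, combined with the hypothesis on $M$ to discard any separating sphere components produced by the resulting surgery, reduces to the case where $D$ is disjoint from $K$. Such a $D$ would lie in $N$ and compress $S$, contradicting $S$'s tautness. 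This last reduction is the step requiring the most care, as it is where the hypothesis on $M$ does its essential work.
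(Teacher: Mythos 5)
Your forward direction ($\wind_K=\wrap_K$ implies equality in (\markb)) is essentially the paper's argument: take a taut $\hat{S}$ realizing the wrapping number, restrict to $N$, compute Euler characteristics, and sandwich with Lemma~\ref{lem:windineq}. One point to make explicit is that $S=\hat{S}\cap N$ has no sphere or disc components, so that $x(z)\leq -\chi(S)$ and not merely $x(z)\leq \sum\max\{0,-\chi(S_i)\}$ (which is an inequality in the unhelpful direction); this holds because any such component would be, or would cap off to, a non-separating sphere or disc in $M$. The paper isolates exactly this observation as a preliminary claim in its proof.

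The genuine gap is in your converse, in the step you yourself flag: verifying incompressibility of the capped-off surface $\hat{S}$. You cannot in general isotope a compressing disc $D$ for $\hat{S}$ off of $K$ by an ``innermost-point analysis on $D\cap K$'' --- $D\cap K$ is a finite set of points with no innermost structure to induct on, and compressing discs that essentially meet the core of a filling are precisely the phenomenon (Gabai discs, Scharlemann cycles) that the sutured-manifold machinery elsewhere in the paper exists to confront; they are not removable by isotopy. Fortunately the reduction is unnecessary. Your $\hat{S}$ is already norm-minimizing ($-\chi(\hat{S})=x(\hat{z})$) and, after discarding separating components, every component is non-separating. If some component admitted a compressing disc, compression would lower $-\chi$ by $2$, so norm-minimality forces that component to be a torus or annulus; compressing a non-separating torus or annulus yields a non-separating sphere or disc in $M$, contrary to hypothesis. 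Hence $\hat{S}$ is taut and $\wrap_K(\hat{z})\leq |K\cap\hat{S}|=\wind_K(\hat{z})$, as you want. (This is the same observation the paper makes at the end of the proof of Lemma~\ref{no gabai disc}.) For comparison, the paper's own proof of this direction argues the contrapositive instead: for any taut $S$ in $N$, exceptionality forces the inequality $x_N(z)=-\chi(\hat{S})+|\hat{S}\cap K|>x_M(\hat{z})+\wind_K(\hat{z})$ to be strict, since equality in both terms would manufacture exactly the taut representative that exceptionality forbids --- the same underlying fact, with the tautness verification hidden in the strictness.
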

\begin{proof}
Assume $M=N_T(b)$ where $K=K_b$.  For a class $\hat{z} \in H_2(M, \bdry M)$, let $z = \rho_b(\hat{z}) \in H_2(N, \bdry N)$.

First, we claim that if $S$ is a taut representative of a class $[S] \in \im \rho_b$, then 
\[
x([S]) = x(S) = -\chi(S).
\]
To see this, let $S \subset N$ be taut and have each component of $\boundary_T S$ of slope $b$. By definition, $x([S]) = x(S)$. Suppose that $x(S) \neq -\chi(S)$. Then $S$ contains a component $P$ which is a sphere or disc. Since $S$ is taut, $P$ is non-separating. Capping off $\boundary_T P$ in $M$, if necessary, creates a non-separating sphere or disc in $M$, contrary to hypothesis.

We now embark on the proof. The claim is trivially satisfied for the 0 class, so assume that $0 \neq \hat{z} \in H_2(M, \bdry M;\Z)$ is not an exceptional class for $K$.  Then there is a taut representative $\hat{S} \subset M$ of $\hat{z}$ for which $\wrap_K(\hat{S}) = \wind_K(\hat{S})$. Thus
\[
x_N(z) \leq x_N(S) = -\chi(S) = -\chi(\hat{S}) + \wind_K(\hat{S}) = x_M(\hat{z}) + \wind_K(\hat{z}) \leq x_N(z)
\]
where the last inequality is due to Inequality (\markb).  Consequently $x_M(\hat{z}) + \wind_K(\hat{z}) = x_N(z)$, and thus $\hat{z}$ is not norm-reducing with respect to $K$.

Conversely, assume that $\hat{z} \in H_2(M, \boundary M)$ is exceptional with respect to $K$ so that $\wrap_K(\hat{z}) > \wind_K(\hat{z})$.
Let $S$ be a taut surface in $N$ representing $z$, and let $\hat{S} \subset M$ be the result of capping off $\boundary_T S$ with discs so that $[\hat{S}] = \hat{z}$. Then
\[
x_N(z) = -\chi(S) = -\chi(\hat{S}) + |\hat{S} \cap K| > x_M(\hat{z}) + \wind_K(\hat{z})
\]
because $|\hat{S} \cap K| \geq |\hat{S} \cdot K| = \wind_K(\hat{z})$ and $-\chi(\hat{S}) \geq x_M(\hat{z})$.
Thus, $\hat{z}$ is norm-reducing with respect to $K$.
\end{proof}

\subsection{Multi-$\bdry$-compressing discs}
As is often the case in studies of Dehn filling, we will want use a surface $\hat{Q}$ in one filling $M' = N_T(a)$ of $N$ to say something useful about a different filling $M = N_T(b)$. For us, the surface $\hat{Q}$ will be most useful if it has no ``multi-$\boundary$-compressing disc.''

Suppose that $\wihat{S} \subset M'=N_T(a)$ is a surface transversally intersecting $K' \subset M'$ non-trivially. A \defn{multi-$\boundary$-compressing disc} for $\wihat{S}$ (with respect to $K'$) is a disc $D \subset N$ such that there is a component $A \subset T  - S$ such that:
\begin{itemize}
\item The interior of $D$ is disjoint from $\boundary N \cup S$
\item The boundary of $D$ is a simple closed curve lying in $S \cup A$
\item After orienting $\boundary D$, $\boundary D \cap A$ is a non-empty, coherently oriented collection of spanning arcs of $A$. 
\end{itemize}

Given a multi-$\boundary$-compressing disc $D$ for $\wihat{S}$, then we may create a new surface $\wihat{S}'$  that is homologous to $\hat{S}$ but intersects $K'$ in two fewer points: that is, $[\wihat{S}] = [\wihat{S}'] \in H_2(M', \boundary M')$ and $|\wihat{S}' \cap K'| = |\wihat{S} \cap K'| - 2$. We create $\wihat{S}'$ by removing the open regular neighborhood of two points of $K' \cap \wihat{S}$, attaching the annulus $A$ (from the definition of ``multi-$\boundary$-compressing disc'') and then compressing using $D$. 

The next lemma allows us to know when we have a surface without a multi-$\boundary$-compressing disc.

\begin{lemma}\label{no gabai disc}\
\begin{itemize}
\item Suppose that $\wihat{S} \subset M'$ is a sphere transverse to $K'$ such that $S = \wihat{S} \cap N$ is incompressible and not $\boundary$--parallel.  
 Then either $M'$ has a lens space summand or $\wihat{S}$ does not have a multi-$\boundary$-compressing disc with respect to $K'$.   
 
\item Suppose that $\wihat{S} \subset M'$ is a disc transverse to $K'$ such that $S = \wihat{S} \cap N$ is incompressible.  
 Then either $M'$ has a lens space summand or $\wihat{S}$ does not have a multi-$\boundary$-compressing disc with respect to $K'$.  
 
\item Suppose that $\wihat{S} \subset M'$ is a taut representative of some non-zero class in $H_2(M', \boundary M';\Z)$ and that, out of all such taut surfaces representing that class, $\wihat{S}$ minimizes $|\wihat{S} \cap K'|$. 
Then either  $M'$ contains a non-separating sphere or disc or $\wihat{S}$ does not have a multi-$\boundary$-compressing disc with resepect to $K'$.
\end{itemize}
\end{lemma}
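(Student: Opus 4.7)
The plan is to treat all three parts with a common strategy: assume $\wihat{S}$ admits a multi-$\boundary$-compressing disc $D$, with $k \geq 1$ coherently oriented spanning arcs in some annulus component $A \subset T \setminus S$; apply the construction described in the paragraph just before the lemma statement to produce a new surface $\wihat{S}'$; and derive either a contradiction or the stated alternative. The first common step is a direct Euler-characteristic computation giving $\chi(\wihat{S}') = \chi(\wihat{S})$ while $|\wihat{S}' \cap K'| = |\wihat{S} \cap K'| - 2$ and $[\wihat{S}'] = [\wihat{S}] \in H_2(M', \boundary M';\Z)$: removing the two meridian discs of the filling torus bounded by $c_1, c_2 = \boundary A$ contributes $-2$ to $\chi$, attaching the annulus $A$ contributes $0$, and compressing along $D$ contributes $+2$. (The class is preserved because the sphere $D_1 \cup A \cup D_2$ bounds a ball in the filling solid torus.)

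For the third part, the argument is essentially complete from here. Since $-\chi(\wihat{S}') = -\chi(\wihat{S}) = x([\wihat{S}])$, if $\wihat{S}'$ is taut then the minimality of $|\wihat{S} \cap K'|$ over taut representatives of $[\wihat{S}]$ is violated. Otherwise $\wihat{S}'$ either admits a compressing disc (compressing produces a representative of $[\wihat{S}]$ with strictly smaller $-\chi$, contradicting the tautness of $\wihat{S}$), has a non-separating sphere or disc component (which is exactly the stated alternative conclusion), or has only separating sphere/disc components (which, per the convention, can be discarded to produce a taut representative with strictly smaller intersection count, again contradicting minimality).

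For the first two parts, the argument is more delicate. Since $\chi(\wihat{S}') = \chi(\wihat{S}) \in \{1,2\}$, the surface $\wihat{S}'$ must contain a sphere or disc component $\Sigma$. The plan is to analyze the $3$-dimensional submanifold $Y \subset M'$ obtained as a regular neighborhood of the $2$-complex $\wihat{S} \cup A \cup D$ (equivalently, the trace of the surgery cobordism carrying $\wihat{S}$ to $\wihat{S}'$), and to show that $Y$ is a punctured lens space whose order is determined by the number $k$ of coherently oriented spanning arcs of $D$ on $A$. The sphere $\Sigma \subset \boundary Y$ then serves as the connect-summing sphere exhibiting the desired lens space summand of $M'$. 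The hypotheses that $S$ is incompressible and (in part 1) not $\boundary$-parallel ensure that $\Sigma$ is essential in $M'$; in the degenerate case $k = 1$, the disc $D$ reduces to a standard $\boundary$-compressing disc for $S$, and combined with $A$ and the two meridian discs, this contradicts the hypotheses directly (yielding either a compressing disc for $S$ or a $\boundary$-parallelism).

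The main technical obstacle is the local analysis near the $2$-complex $\wihat{S} \cup A \cup D$ in parts 1 and 2: identifying $Y$ as a punctured lens space, verifying that the lens space order is precisely $k$ (rather than yielding a more exotic $3$-manifold with finite fundamental group), and using the hypotheses on $S$ to rule out the trivial case in which $Y$ is a $3$-ball. A subsidiary care point in part 3 is that the iterative discarding of separating sphere or disc components should terminate without accidentally introducing non-separating ones not already present in $M'$.
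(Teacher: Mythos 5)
Your overall strategy is the same as the paper's: build the surgered surface $\wihat{S}'$ with $\chi(\wihat{S}')=\chi(\wihat{S})$, $[\wihat{S}']=[\wihat{S}]$, and $|\wihat{S}'\cap K'|=|\wihat{S}\cap K'|-2$, then contradict minimality in the taut case and extract a lens space summand in the sphere and disc cases. But there are two genuine gaps. The first is that for parts 1 and 2 you explicitly defer the central step --- identifying the region $Y$ between $\wihat{S}$ and $\wihat{S}'$ as a punctured lens space of order $k=|\boundary D\cap A|$ --- calling it ``the main technical obstacle.'' That identification \emph{is} the content of those two bullets; without it you have only a reduction, not a proof. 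The paper's route is short but you need to say it: the tubed surface $\wihat{R}$ (push $S\cup A$ into $N$ and cap off) separates $Y$ into a product collar of $\wihat{S}$ plus a one-handle on one side and a product collar of $\wihat{S}'$ plus a one-handle (dual to the compression along $D$) on the other, so when $\wihat{S}$ and $\wihat{S}'$ are both spheres (resp.\ discs), $\wihat{R}$ is a genus one Heegaard surface of the twice-punctured (resp.\ boundary-punctured) closed manifold so obtained; coherence of the $k$ arcs of $\boundary D$ on $A$ forces $\boundary D$ to meet the meridian of the tube $k$ times, so the lens space has order $k$, which is nontrivial exactly when $k\geq 2$, with the $k=1$ case handled separately as a genuine $\boundary$--compression of $S$ (using irreducibility of $N$ together with incompressibility and, for part 1, non-$\boundary$-parallelism).

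The second gap is in your trichotomy for part 3. You claim that if $\wihat{S}'$ admits a compressing disc, then compressing yields a representative of the class with strictly smaller $-\chi$, contradicting tautness of $\wihat{S}$. This fails when the compressible component is a torus or annulus: compression drops $\chi$-negativity of that component from $0$ to $0$ in the Thurston norm $\sum\max\{0,-\chi\}$, so there is no norm contradiction. The correct conclusion there --- and the one the paper draws --- is that since $\wihat{S}'$ is already norm-minimizing, any compressible component must be a torus or annulus, it is non-separating (every component of $\wihat{S}$ is non-separating, and the component bijection preserves this), and compressing it produces a non-separating sphere or disc in $M'$, which is the stated alternative, not a contradiction. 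Relatedly, your third branch (``only separating sphere or disc components'') is vacuous for the same reason: the convention that taut representatives have no separating components, transported through the component bijection, rules it out.
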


\begin{proof}
Suppose that $\wihat{S} \subset M'$ is a surface transverse to $K'$, such that $S$ is incompressible and not $\boundary$-parallel.  If $K'$ is disjoint from $\hat{S}$, then trivially there is no multi-$\bdry$-compressing disc.  Hence we further assume $K'$ transversally intersects $\hat{S}$ non-trivially.

Suppose that $D$ is an oriented multi-$\bdry$-compressing disc for $\hat{S}$. Then there is an annulus component $A \subset T\cut S$ such $\bdry D \cap A$ is a non-empty collection of coherently oriented spanning arcs of $A$. Let $\hat{R}$ be the surface in $M'$ obtained from isotoping $S \cup A \subset N$ with support in a neighborhood of $A$ to be properly embedded in $N$ and then capping off the boundary components in $T$ with meridional discs of the filling solid torus; i.e. $\hat{R}$ is the result of tubing $\hat{S}$ along a particular arc of $K' \cut \hat{S}$. A further slight isotopy makes $\hat{R}$ disjoint from $\hat{S}$.  

Now let $\wihat{S}'$ be the result of compressing $\wihat{R}$ using $D$, and slightly isotoping to be disjoint from $\hat{R}$. Observe that $-\chi(\wihat{S}') = -\chi(\wihat{S})$ and that there is a natural bijection between the components of $\wihat{S}$ and $\wihat{S}'$. 

First assume $\hat{S}$ is a sphere.  Then $\hat{S}'$ must also be a sphere.
If $\boundary D$ runs just a single time across $A$,  then $D$ provides a $\bdry$--compression for $S$ in $N$.  Since $N$ is irreducible, either $S$ is compressible or $S$ is a $\boundary$-parallel annulus contrary to hypothesis. If $\boundary D$ runs multiple times across $A$, then $\wihat{S}$ and $\wihat{S}'$ cobound a $3$--manifold $W$ in which $\hat{R}$ is a genus $1$ Heegaard surface.  Because  $\hat{S}$ and $\hat{S}'$ are both spheres, $W$ is a twice-punctured lens space of finite order $|\bdry D \cap A|>1$.  The complement of a neighborhood of an embedded arc in $W$ that connects both components of $\bdry W$ is therefore a non-trivial lens space summand of $M'$.  

When $\hat{S}$ is a disc, we similarly obtain that $\hat{S}'$ is also a disc.  Along with an annulus in $\bdry M'$, the discs $\hat{S}$ and $\hat{S}'$ bound a punctured lens space $W$ in which $\hat{R}$ is a punctured Heegaard torus.  Again, this lens space has finite order $|\bdry D \cap A|$ which is non-trivial since $\hat{S}$ is incompressible.  Hence $W$ is a lens space summand of $M'$.

Now assume that $\wihat{S}$ is a taut representative of a class in $H_2(M', \boundary M'; \Z)$.  If $\hat{S}$ has a sphere, then the component  must be non-separating since $\hat{S}$ is taut.  So we may further assume $\hat{S}$ is not a sphere.  By construction, the surface $\wihat{S}'$ represents the same class, has the same euler characteristic, and intersects $K'$ two fewer times than does $\wihat{S}$. Furthermore, since every component of $\hat{S}$ is non-separating, every component of $\hat{S}'$ is also non separating.   If $\wihat{S}'$ is not taut, then since it is homologous to the taut surface $\hat{S}$ and is also Thurston norm minimizing for this homology class, it must have a compressible component that is a non-separating torus or annulus.  Compressing this torus or annulus creates a non-separating sphere or disc in $M'$.
\end{proof}

\section{A key theorem of Taylor}\label{sec:taylor}
In \cite{T}, the second author develops some classical results (\cite[Application III]{S1} and \cite{S2}) from Scharlemann's combinatorial version \cite{S1} of  Gabai's sutured manifold theory  \cite{G1, G2, G3}  in terms of surgeries on knots with exceptional classes.  Here we adapt a key technical theorem for our purposes.
 
\begin{theorem}[{Cf.\ \cite[Theorem 3.14]{T}}]\label{Previous Sutured Thm}
Assume that $N$ is irreducible and $\boundary$--irreducible. Let $a,b$ be two distinct slopes in $T \subset \bdry N$. 
Suppose that $M=N_T(b)$ is not a solid torus, has no proper summand which is a rational homology sphere, and $H_2(M, \boundary M) \neq 0$.
Suppose that $M'=N_T(a)$ contains a properly embedded, compact, orientable surface $\wihat{Q} \subset M'$  that transversally intersects $K'$ non-trivially, does not have a multi-$\bdry$-compressing disc for $K'$, and restricts to an incompressible surface\footnote{We use the convention that any sphere component of an incompressible surface does not bound a ball, and any disc component is not $\bdry$--parallel.}
 $Q = \hat{Q} \cap N$ in $N$.
 
If
\[
-\chi(\hat{Q}) < |\hat{Q} \cap K'|(\Delta(a,b) - 1),
\]
then $M$ is irreducible and $H_2(M,\boundary M)$ has no exceptional classes with respect to $K$.
\end{theorem}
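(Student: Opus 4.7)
The plan is to reduce this statement to \cite[Theorem 3.14]{T} essentially directly, with the bulk of the work being a translation of hypotheses and a reconciliation of the present language of exceptional classes with the $K$--taut / Scharlemann $\beta$--norm framework of \cite{T}.

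The first step is to verify that the hypotheses on $\hat{Q}$ make $Q = \hat{Q} \cap N$ a valid \emph{parameterizing surface} for a sutured manifold structure on $N$ whose suture on $T$ is a pair of oppositely oriented curves of slope $b$. Incompressibility of $Q$ and the absence of a multi-$\boundary$-compressing disc (the appropriate analog, in the presence of $K'$, of the standard ``no bad $\boundary$--compressions'' condition) are exactly what is required so that the sutured manifold machinery of \cite{T} applies without having to first modify $\hat{Q}$ in a way that would decrease $|\hat{Q} \cap K'|$. Each arc of $\boundary Q \cap T$, having slope $a$, meets each slope--$b$ suture curve in exactly $\Delta(a,b)$ points, while $\hat{Q}$ meets the core $K$ of the $b$--filling solid torus in $|\hat{Q} \cap K'|$ points. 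Rearranging, the hypothesis $-\chi(\hat{Q}) < |\hat{Q} \cap K'|(\Delta(a,b)-1)$ becomes precisely the parameterizing--surface inequality used to drive the proof of \cite[Theorem 3.14]{T}.

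I would then invoke \cite[Theorem 3.14]{T} to obtain both conclusions. The hypotheses that $M$ is not a solid torus, has no proper rational homology sphere summand, and satisfies $H_2(M, \boundary M) \neq 0$ rule out the degenerate outputs of the sutured manifold machinery: a reducing sphere or essential disc in $M$, together with the parameterizing surface $\hat{Q}$, would either force a proper summand of $M$ of the forbidden type or produce a modification of $\hat{Q}$ reducing $|\hat{Q} \cap K'|$ in contradiction with the stated strict inequality. Once $M$ is irreducible and $\boundary$--irreducible — in particular contains no non-separating sphere or disc — Lemma \ref{lem:exceptionalisnormdegen} applies, and the conclusion of \cite[Theorem 3.14]{T} that no nonzero class admits a representative that is simultaneously $\nil$--taut and $K$--taut translates to: no class in $H_2(M, \boundary M)$ is exceptional with respect to $K$.

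The main obstacle is making the translation of frameworks precise, in particular accounting for the factor $\Delta(a,b)-1$ rather than $\Delta(a,b)$: one unit of intersection per boundary arc of $Q \cap T$ with the slope--$b$ suture is ``paid for'' by the meridian disc added when filling along $b$, and the ``$-1$'' records this deficit. A subsidiary point is to verify that an exceptional class in $M$ really does produce a surface violating the parameterizing--surface inequality. Concretely, starting from a $\nil$--taut representative $\hat{S}$ of the class that minimizes $|K \cap \hat{S}|$, one must show that the excess $|K \cap \hat{S}| > \wind_K(\hat{z})$ that characterizes exceptionality is exactly the extra intersection that Scharlemann's sutured manifold hierarchy is designed to rule out via $\hat{Q}$. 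Once those combinatorial bookkeeping items are handled, the proof is a direct application of the earlier theorem.
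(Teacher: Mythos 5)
Your overall strategy---verify the hypotheses of \cite[Theorem 3.14]{T}, use the assumptions on $M$ and on $\hat{Q}$ to rule out the degenerate conclusions, and read off the result---is the same route the paper takes, and your accounting for the factor $\Delta(a,b)-1$ is correct: since $\chi(Q)=\chi(\hat Q)-|\hat Q\cap K'|$ and $|\partial Q\cap b|=\Delta(a,b)\,|\hat Q\cap K'|$, the hypothesis rearranges to the index inequality $I(Q)=-2\chi(Q)<2|\partial Q\cap b|$. But the final step, where the conclusion is extracted, is garbled in two ways. First, you have inverted the relevant conclusion of \cite[Theorem 3.14]{T}. A class is \emph{exceptional} precisely when \emph{no} representative is simultaneously $\nil$--taut and $K$--taut, so to conclude that there are no exceptional classes you need the theorem to supply, for \emph{every} nonzero class $\hat y$, a representative that \emph{is} both $\nil$--taut and $K$--taut. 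That is what Conclusion (1) of \cite[Theorem 3.14]{T} provides: a hierarchy of $(M,\nil,K)$ that is simultaneously $K$--taut and $\nil$--taut whose first decomposing surface represents $\hat y$; $K$--tautness forces $K$ to meet that surface coherently, so $\wind_K=\wrap_K$ on $\hat y$. As written, your sentence asserts the opposite and would yield that every class is exceptional. (The $\nil$--tautness of $(M,\nil,\nil)$ is also what gives irreducibility of $M$, which your proposal never addresses; and Lemma~\ref{lem:exceptionalisnormdegen} is not needed here, since the conclusion is stated directly in terms of exceptional classes.)

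Second, your ``subsidiary point''---that one must check an exceptional class in $M$ produces a surface violating the parameterizing-surface inequality---is not part of this proof and signals a structural confusion. The parameterizing surface $Q$ lives in $N$ and comes from the \emph{other} filling $M'$; its index inequality only rules out Conclusion (3) of \cite[Theorem 3.14]{T}. The no-exceptional-classes statement is then read off from the surviving Conclusion (1), not derived by contradiction from a hypothetical exceptional class in $M$. Relatedly, the ``no multi-$\partial$-compressing disc'' hypothesis is not merely a normalization ensuring $Q$ is a good parameterizing surface: it is used, via the Scharlemann cycle argument of \cite{CGLS} as explained in \cite{S2} and \cite{T}, to rule out the Gabai-disc alternative (Conclusion (2)), while the no-proper-rational-homology-sphere-summand hypothesis rules out Conclusion (4). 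Your proposal lumps these together, whereas each hypothesis is needed to kill a specific alternative conclusion of \cite[Theorem 3.14]{T}.
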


For the proof, we content ourselves with explaining how the statement follows from \cite[Theorem 3.14]{T}. We assume familiarity with the basic definitions regarding $\beta$--taut sutured manifold technology from \cite{S1} (see also \cite{T}).

\begin{proof}
Our notation is very similar to that of \cite{T}, except that we are using $K$  as the core knot of the filling $M=N(b)$ instead of $\beta$   and we consider classes $\hat{y} \in H_2(M,\boundary M)$ rather than classes $y$.

Our hypotheses immediately imply Conditions (1) and (3) of \cite[Theorem 3.14]{T}.
Since $N$ is irreducible and $\bdry$--irreducible, we may consider it as a taut sutured manifold $(N,\nil, \nil)$, considering $\bdry N$ as toroidal sutures. The filling $M=N_T(b)$ induces a sutured manifold $(M, \nil, K)$ that is then a $K$--taut sutured manifold, providing Condition (2). 

Since $\hat{Q} \cap K' \neq \nil$ and the curves of $\bdry_T Q$ have slope $a$, the boundary of $Q$ is not disjoint from the slope $b$ in $T$. Sphere components of $\hat{Q}$ that are disjoint from $K'$ are the sphere components of $Q$; however, since the irreducibility of $N$ implies that any sphere component of $Q$ must bound a ball in $N$, the incompressibility of $Q$ prohibits the existence of such sphere components.
Furthermore, no component of $Q$ is a disc with essential boundary since $N$ is $\bdry$--irreducible and no component of $Q$ is a disc with inessential boundary due to the incompressibility of $Q$ and irreducibility of $N$.  Thus Condition (4) is satisfied.

We may now apply \cite[Theorem 3.14]{T}.  Our hypothesis that $M$ has no proper summand that is a rational homology sphere immediately rules out  Conclusion (4) of \cite[Theorem 3.14]{T}.  We proceed to show that Conclusions (3) and (2) also fail and that Conclusion (1) implies our stated result.

In the terminology of \cite[Section 7]{S1} and \cite[Section 2.2]{T}, the surface $Q$ is a {\em parameterizing surface} for the sutured manifold $(M, \nil, K)$. By definition (again, see \cite[Definition 7.4]{S1} and \cite[Section 2.2]{T}), its {\em index} $I(Q)$ is given by
\[
I(Q) = -2\chi(Q)
\]
since (i) there are no annular sutures on $\bdry M$ and (ii) $K$ is a knot (rather than a collection of properly embedded arcs). Without loss of generality, we may assume that the slope $b$ has been isotoped in $T$ to intersect $\bdry Q$ minimally. Thus, $|\boundary Q \cap b|$ is equal to $\Delta(a,b)|\wihat{Q} \cap K'|$. Our assumed inequality on the Euler characteristic of $\wihat{Q}$ can then be rearranged to yield
\[
I(Q) < 2|\boundary Q \cap b|. 
\]
Hence, Conclusion (3) of \cite[Theorem 3.14]{T} does not hold.

A {\em Gabai disc} for $Q$ is a disc $D$ embedded in $M$ that $K$ non-trivially and coherently intersects, such that its restriction to $N$ is transverse to $Q$ and $|Q \cap \bdry D| < \Delta(a,b) |\bdry_T Q|$. 
It is shown in \cite{CGLS} (though without the language of Gabai discs), and further explained in \cite{S2} and \cite{T}, that a Gabai disc will contain a Scharlemann cycle. As $Q$ is incompressible and $N$ is irreducible, the interior of the Scharlemann cycle can be isotoped to be a multi-$\bdry$-compressing disc for $\wihat{Q}$. See \cite[Section 4]{T} for more details. (Although observe that \cite[Lemma 4.3]{T} neglected to consider possible circles of intersection between the interior of the Scharlemann cycle and $Q$. We have added the incompressibility hypotheses to $Q$ to deal with this.) Since we are assuming that $\wihat{Q}$ has no multi-$\bdry$-compressing disc, Conclusion (2) of \cite[Theorem 3.14]{T} does not hold.

Consequently, the Conclusion (1) of \cite[Theorem 3.14]{T} holds. Hence, given any non-zero class $\hat{y} \in H_2(M, \bdry M; \Z)$, there is a $K$--taut hierarchy of $(M, \nil, K)$ which is also $\nil$--taut such that the first decomposing surface $\hat{S} \subset M$ represents $\hat{y}$.  In particular, since sutured manifold decompositions yields a taut sutured manifold only if the decomposing surface is taut, the $K$--tautness and $\nil$--tautness of the hierarchy implies the surface $\hat{S}$ must be both $K$--taut and $\nil$--taut (see e.g.\  \cite[Definition 4.18]{S1},  \cite[Section 2]{S2}, \cite[Lemma 3.5 and Section 4]{G1}).  
Since $(M, \nil, \nil)$ is $\nil$--taut, $M$ is irreducible. By the definition of $K$--taut, the knot $K$ always intersects $\hat{S}$ with the same sign. That is, $\wind_K(\hat{S}) = \wrap_K(\hat{S})$. Since $\hat{S}$ is $\nil$--taut, this implies that $\hat{y}$ is not an exceptional class. Since this holds true for all non-zero classes in $H_2(M, \boundary M; \Z)$, so there are no exceptional classes in $H_2(M, \boundary M;\Z)$ with respect to $K$.
\end{proof}

\section{The Thurston norm and dual norm under Dehn filling} 

\subsection{The Thurston norm}

\begin{theorem}\label{thm:BoundingThurstonNorm}
Suppose that $N$ is irreducible and $\boundary$--irreducible. Also assume that $M=N_T(b)$ is not a solid torus and has no proper rational homology sphere summand and that either $M$ is reducible or that  $H_2(M, \bdry M)$ has an exceptional class with respect to $K$. Then all of the following hold for $M'=N_T(a)$:
\begin{itemize}
\item Either $M'$ has a lens space summand or 
	\begin{itemize}
		\item $M'$ is irreducible and $\bdry$--irreducible, and 
		\item  $K' \subset M'$ is mp-small; that is, there is no essential, connected, properly embedded planar surface $Q \subset N$ such that $\bdry Q = \bdry_T Q \neq \nil$ and each component of $\bdry Q$ has slope $b$ in $T$.
	\end{itemize}

\item For every  $\hat{y} \in H_2(M',\bdry M')$,
\[
x(\hat{y}) \geq \wrap_{K'}(\hat{y})(\Delta(a,b) - 1).
\]
\end{itemize}
\end{theorem}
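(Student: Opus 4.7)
The plan is to apply the contrapositive of Theorem~\ref{Previous Sutured Thm}. Hypotheses~(1) and~(2) of that theorem are essentially built into our standing assumptions, and the disjunction in our hypothesis on $M$ is precisely the negation of its conclusion. Consequently, for every surface $\hat{Q}\subset M'$ fulfilling the ``surface hypothesis'' of Theorem~\ref{Previous Sutured Thm}---properly embedded in $M'$, transversally meeting $K'$ non-trivially, with no multi-$\bdry$-compressing disc, and with $Q=\hat{Q}\cap N$ incompressible in $N$---we must have
\[
-\chi(\hat{Q})\ \ge\ |\hat{Q}\cap K'|\,(\Delta(a,b)-1).
\]
The rest of the proof consists in producing such surfaces $\hat{Q}$ in the right settings.

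For the second bullet, I fix a non-zero integral class $\hat{y}\in H_2(M',\bdry M')$ and let $\hat{Q}$ be a taut representative of $\hat{y}$ minimizing $|\hat{Q}\cap K'|$, so that $-\chi(\hat{Q})=x(\hat{y})$ and $|\hat{Q}\cap K'|=\wrap_{K'}(\hat{y})$. If the wrapping number is zero the desired inequality is immediate, so suppose it is positive. The minimality of $|\hat{Q}\cap K'|$ rules out any multi-$\bdry$-compressing disc, for applying one would produce a new homologous surface of the same Euler characteristic (hence still taut) with two fewer intersections with $K'$. Tautness forces $Q=\hat{Q}\cap N$ to be incompressible: a compressing disc for $Q$ in $N$ is a compressing disc for $\hat{Q}$ in $M'$, and compression strictly decreases $-\chi$. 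The displayed inequality now reads $x(\hat{y})\ge\wrap_{K'}(\hat{y})(\Delta-1)$, and homogeneity together with continuity extends it to all real classes.

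For the first bullet, assume $M'$ has no lens space summand and suppose toward contradiction that one of the three asserted properties fails. In each failure I produce a surface $\hat{Q}\subset M'$ with $-\chi(\hat{Q})\in\{-1,-2\}$ and $|\hat{Q}\cap K'|\ge 1$, so that (since $\Delta\ge 1$) the right-hand side $|\hat{Q}\cap K'|(\Delta-1)$ is non-negative and the reverse strict inequality $-\chi(\hat{Q})<|\hat{Q}\cap K'|(\Delta-1)$ holds automatically, contradicting the inequality of the first paragraph. If $M'$ is reducible, take an essential sphere $\hat{Q}\subset M'$ intersecting $K'$ transversally and minimally; irreducibility of $N$ forces $|\hat{Q}\cap K'|\ge 2$, an innermost-disc argument makes $Q$ incompressible in $N$, and the first bullet of Lemma~\ref{no gabai disc} rules out multi-$\bdry$-compressing discs via the no-lens-space hypothesis. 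If $M'$ is $\bdry$-reducible, the analogous argument with a compressing disc for $\bdry M'$ works via the second bullet of Lemma~\ref{no gabai disc}. If $K'$ is not mp-small, take the essential connected planar $Q\subset N$ with $k\ge 1$ slope-$b$ boundary components and cap each component off in $V'=M'\setminus N$ with a \emph{cabled meridian disc}: a properly embedded disc in $V'$ obtained by banding together $\Delta$ parallel meridian discs of $V'$ so that its boundary is a single slope-$b$ curve on $T$ meeting $K'$ transversally in exactly $\Delta$ points. The union $\hat{Q}\subset M'$ is an embedded sphere with $|\hat{Q}\cap K'|=k\Delta$; $Q$ is incompressible by essentiality, and the first bullet of Lemma~\ref{no gabai disc} again supplies the absence of multi-$\bdry$-compressing discs.

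The main technical obstacle is the mp-small case: the cabled meridian disc is not given in the excerpt and must be constructed explicitly, verifying that the banded union lies properly embedded in the solid torus $V'$ with the stated boundary slope on $T$ and the asserted intersection with $K'$. The other two failure cases hinge on the more standard but delicate step of arranging $Q=\hat{Q}\cap N$ to be incompressible in $N$ via minimality in $|\hat{Q}\cap K'|$ and appropriate disc swaps, after which the appeal to Lemma~\ref{no gabai disc} and Theorem~\ref{Previous Sutured Thm} is uniform across all three cases.
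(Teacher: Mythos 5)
Your overall strategy is the same as the paper's: run Theorem~\ref{Previous Sutured Thm} in the contrapositive, feed it spheres/discs/planar surfaces for the first bullet and a taut, $|\hat{Q}\cap K'|$--minimizing representative for the second, and use Lemma~\ref{no gabai disc} to rule out multi-$\bdry$-compressing discs. However, the ``cabled meridian disc'' in your mp-small case does not exist. A properly embedded disc in the filling solid torus $V'\subset M'=N_T(a)$ whose boundary is a single essential simple closed curve on $T$ must have boundary the meridian $a$: any simple closed curve of slope $b$ with $\Delta(a,b)\geq 1$ represents $\pm\Delta(a,b)\neq 0$ in $H_1(V')\cong\Z$, so it bounds no surface in $V'$ at all; and banding $\Delta$ parallel meridian discs produces a boundary homologous to $\Delta[a]$, which is not a simple closed curve for $\Delta\geq 2$. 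So your candidate $\hat{Q}$ is never an embedded sphere in $M'$, and the case collapses. The resolution is that the planar surface witnessing the failure of mp-smallness (like the restrictions of the essential sphere and the $\bdry$-compressing disc) should have $\bdry_T Q$ of \emph{meridional} slope for $K'$, i.e.\ slope $a$; it then caps off with honest meridian discs of $V'$ to a sphere or disc $\hat{Q}\subset M'$ meeting $K'$ in $|\bdry_T Q|\geq 1$ points, and the argument proceeds uniformly exactly as in your reducible case. (The statement's ``slope $b$'' appears to be a slip for the meridian of $K'$; taking it literally, as you did, forces the impossible construction.)

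There is also a smaller gap in your second bullet: after removing a multi-$\bdry$-compression you assert the new surface is ``still taut'' because it is homologous with the same Euler characteristic. That only gives norm-minimizing; the new surface could a priori have a compressible component, which (being a norm-minimizing representative) would have to be a non-separating torus or annulus, whose compression yields a non-separating sphere or disc in $M'$. This is precisely the third bullet of Lemma~\ref{no gabai disc}, and it needs the hypothesis that every sphere and disc in $M'$ separates --- which is why the paper establishes irreducibility and $\bdry$-irreducibility of $M'$ \emph{before} proving the norm inequality, rather than after as you do. Reordering the two bullets and citing Lemma~\ref{no gabai disc} there repairs this. Your remaining steps (the minimality argument for incompressibility of $Q$, the trivial case $\wrap_{K'}(\hat{y})=0$, and homogeneity) match the paper.
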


\begin{remark}
The first conclusion of Theorem~\ref{thm:BoundingThurstonNorm}, that $M'$ is irreducible and $\bdry$--irreducible,  essentially follows from \cite{S2}.
\end{remark}

\begin{proof}

Assume, for the moment, that either $M'$ is reducible or $\bdry$--reducible or that $K'$ is not mp-small. Then there exists an essential, connected, properly embedded planar surface $Q \subset N$ such that 
$\bdry Q$ has at most one component not in $T$, 
$\bdry_T Q$ is non-empty (because $N$ is irreducible and $\bdry$--irreducible), 
and every component of $\bdry_T Q$ has slope $b$. 
Let $\hat{Q} \subset M'$ be the sphere or disc that results from capping off $\bdry_T Q$ with discs. Lemma~\ref{no gabai disc} shows that there is no multi-$\bdry$-compressing disc for $\hat{Q}$.  
Then by Theorem~\ref{Previous Sutured Thm}, since either $M$ is reducible or $H_2(M, \bdry M)$ has an exceptional class with respect to $K$, we have
\[
0 > -\chi(\wihat{Q}) \geq |\wihat{Q} \cap K'|(\Delta(a,b) - 1) \geq 0
\]
which is a contradiction. Thus, $M'$ is irreducible, $\boundary$--irreducible, and $K'$ is mp-small.

Because $M'$ is irreducible and $\boundary$--irreducible, every sphere and disc in $M'$ separates. So consider a %non-zero
 class $\hat{y} \in H_2(M', \boundary M')$.  
Among the taut surfaces in $M'$ representing $\hat{y}$, let $\wihat{Q} \subset M'$ be chosen to minimize $|\wihat{Q} \cap K'|$. 
Tautness implies that no component of $\hat{Q}$ is a sphere or disc, that $x(\hat{y}) = -\chi(\wihat{Q})$, and that there is no compressing disc for $\hat{Q}$ in $M'$.  
The minimality gives $\wrap_{K'}(\hat{y}) = |\wihat{Q} \cap K'|$ while also implying that there can be no compressing disc for $Q = \wihat{Q} \cap N$ in $N$.  
Since every sphere and disc in $M'$ separates, Lemma \ref{no gabai disc} implies there are also no multi-$\boundary$-compressing discs for $Q$ with respect to $K$. 

If $\wihat{Q} \cap K' = \nil$, then $\wrap_{K'}(\hat{y}) = 0$ and the desired inequality is trivially true. Thus, assume that $\wihat{Q} \cap K' \neq \nil$. Using Theorem \ref{Previous Sutured Thm} again, we then have
\[
x(\hat{y}) = -\chi(\wihat{Q}) \geq |\wihat{Q} \cap K'|(\Delta(a,b) - 1) = \wrap_{K'}(\hat{y})(\Delta(a,b) - 1) 
\]
as desired.
\end{proof}

The next corollary is a useful specialization.  

\begin{corollary}\label{cor:twofillings}
Let $N$ be a compact, orientable, irreducible, $\boundary$--irreducible $3$--manifold such that $\boundary N$ is a union of tori. 
Given distinct slopes  $a$ and $b$ in a component $T$ of $\bdry N$, let $M=N_T(b)$ and $M'=N_T(a)$ be the results of Dehn filling along these slopes, and let $K$ and $K'$ be the core knots of these fillings respectively.

Assume $M$ and $M'$ are irreducible, $\bdry$--irreducible and $K'$ has non-zero wrapping number with respect to a class $\hat{y} \in H_2(M', \bdry M')$.  If there exists a class of $H_2(M, \bdry M)$ that is norm-degenerate with respect to $K$, then
\[
\Delta(a,b) \leq 1 + x(\hat{y})/\wrap_{K'}(\hat{y}) \leq 1+x(\hat{y}) \]
\end{corollary}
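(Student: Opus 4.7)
The plan is to derive this corollary directly from Theorem \ref{thm:BoundingThurstonNorm} after verifying its hypotheses and translating between ``norm-reducing'' and ``exceptional'' classes. First I would check that all hypotheses of Theorem \ref{thm:BoundingThurstonNorm} hold under the assumptions of the corollary. Since $M$ is assumed irreducible and $\bdry$--irreducible, it is not a solid torus (whose boundary compresses) and, being irreducible, it has no proper connect summand at all, hence no proper rational homology sphere summand. The ambient manifold $N$ is irreducible and $\bdry$--irreducible by hypothesis.

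Next I would convert the given ``norm-reducing'' class in $H_2(M, \bdry M)$ into an ``exceptional'' class with respect to $K$. This is exactly the content of Lemma \ref{lem:exceptionalisnormdegen}, whose hypothesis that $M$ contain no non-separating sphere or disc is guaranteed by the irreducibility and $\bdry$--irreducibility of $M$. Thus the second alternative of the hypothesis of Theorem \ref{thm:BoundingThurstonNorm} (that $H_2(M,\bdry M)$ has an exceptional class with respect to $K$) is satisfied.

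Now I would apply Theorem \ref{thm:BoundingThurstonNorm} to obtain, for every class $\hat{y} \in H_2(M', \bdry M')$, the inequality
\[
x(\hat{y}) \geq \wrap_{K'}(\hat{y})\bigl(\Delta(a,b) - 1\bigr).
\]
Since $\wrap_{K'}(\hat{y}) \neq 0$ by hypothesis, dividing gives the first asserted inequality
\[
\Delta(a,b) \leq 1 + \frac{x(\hat{y})}{\wrap_{K'}(\hat{y})}.
\]
The final inequality $1 + x(\hat{y})/\wrap_{K'}(\hat{y}) \leq 1 + x(\hat{y})$ follows because $\wrap_{K'}(\hat{y})$, being a non-zero non-negative integer for integral classes (and bounded below by $1$), satisfies $\wrap_{K'}(\hat{y}) \geq 1$, so $x(\hat{y})/\wrap_{K'}(\hat{y}) \leq x(\hat{y})$.

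The main substantive step is really just the hypothesis check; the inequality itself is immediate once Theorem \ref{thm:BoundingThurstonNorm} is in hand. The only mild subtlety is ensuring the equivalence between ``norm-reducing'' and ``exceptional'' classes applies, which relies on the no-non-separating-sphere-or-disc hypothesis from Lemma \ref{lem:exceptionalisnormdegen}; this is where the irreducibility and $\bdry$--irreducibility assumptions on $M$ are genuinely used.
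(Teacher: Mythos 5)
Your proof is correct and follows essentially the same route as the paper: verify the hypotheses of Theorem~\ref{thm:BoundingThurstonNorm}, use Lemma~\ref{lem:exceptionalisnormdegen} (justified by irreducibility and $\bdry$--irreducibility of $M$) to pass from a norm-reducing class to an exceptional one, and then rearrange the resulting inequality using $\wrap_{K'}(\hat{y}) \geq 1$. The only cosmetic difference is that the paper phrases the non-solid-torus check in terms of $N$ via the nontriviality of the second homology groups, while you deduce it directly from $\bdry$--irreducibility of $M$; both are fine.
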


\begin{proof}
Since we may assume that both $H_2(M, \bdry M)$ and $H_2(M', \bdry M')$ are non-trivial, $N$ is not a solid torus.  By the irreducibility and $\bdry$--irreduciblity of $M$ and $M'$, every sphere and disc in $M$ and $M'$ must separate.  Thus, according to Lemma~\ref{lem:exceptionalisnormdegen} any class in $H_2(M,\bdry M)$ that is norm-degenerate with respect to $K$ is also exceptional with respect to $K$.
Then, due to Theorem~\ref{thm:BoundingThurstonNorm}, for every non-zero
$\hat{y} \in H_2(M', \bdry M')$ we have $x(\hat{y}) \geq \wrap_{K'}(\hat{y})(\Delta(a,b) - 1)$.  When the wrapping number is non-zero, we may obtain the stated inequalities.
\end{proof}

We can now bound the number of slopes producing filled manifolds with norm-reducing classes (with respect to the filling).

\begin{corollary}\label{cor:boundsondegenerateslopes}
Let $N$ be a compact, orientable, irreducible, and $\boundary$--irreducible $3$--manifold such that $\boundary N$ is a union of tori. 
Assume for $i=1,2$, there is a slope $a_i$ in the component $T$ of $\bdry N$ such that the manifold $M'_i = N_T(a_i)$  is irreducible and $\bdry$--irreducible and  the core $K'_i$ of the Dehn filling has non-zero wrapping number with respect to a class $\hat{y}_i \in H_2(M'_i, \bdry M'_i)$.  If $\Delta(a_1, a_2) >0$, then there are at most 
\[
(1 + x(\hat{y}_1))(1 + x(\hat{y}_2)) + (\Delta(a_1, a_2)-1)(1 + x(\hat{y}_1))^2
\]
slopes $b \subset T$ distinct from $a_1$ and $a_2$
 such that the $3$--manifold $N_T(b)$ obtained by filling $T$ along $b$ is irreducible, $\bdry$--irreducible, and has a norm-reducing class with respect to the filling.
\end{corollary}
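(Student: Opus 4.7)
The plan is to combine two applications of Corollary~\ref{cor:twofillings} with a lattice-counting argument on $H_1(T;\Z)$.

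First, for each slope $b$ of the desired form, both hypotheses required to apply Corollary~\ref{cor:twofillings} with either $a_i$ playing the role of ``$a$'' are in place: by assumption $M'_i = N_T(a_i)$ is irreducible and $\boundary$--irreducible with $\wrap_{K'_i}(\hat{y}_i) > 0$, while $N_T(b)$ is (by the definition of the slopes being counted) irreducible, $\boundary$--irreducible, and carries a norm-reducing class with respect to $K_b$. The corollary then yields
\[
\Delta(a_i, b) \leq 1 + x(\hat{y}_i)/\wrap_{K'_i}(\hat{y}_i) \leq 1 + x(\hat{y}_i) =: n_i, \qquad i = 1, 2.
\]
Hence it suffices to bound the number of slopes $b \subset T$ distinct from $a_1, a_2$ satisfying $\Delta(a_1, b) \leq n_1$ and $\Delta(a_2, b) \leq n_2$.

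Next, I would reduce to lattice counting. Choose a basis $\mu, \lambda$ of $H_1(T;\Z)$ so that $a_1$ corresponds to $\mu$ and $a_2$ to $p\mu + q\lambda$ with $\gcd(p, q) = 1$ and $q = \Delta > 0$. Any slope corresponds to a primitive class $r\mu + s\lambda$ taken up to sign, and one computes $\Delta(a_1, b) = |s|$, $\Delta(a_2, b) = |ps - qr|$. Taking $s \geq 0$ as a fundamental domain for the $\pm 1$ action (the case $s = 0$ produces only $a_1$), the problem becomes counting primitive pairs $(r, s) \in \Z^2$ with $1 \leq s \leq n_1$ and $|ps - qr| \leq n_2$.

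Finally, perform the count. For each fixed $s$, the condition on $r$ confines it to an interval of length $2n_2/\Delta$, so at most $\lfloor 2n_2/\Delta \rfloor + 1$ integers qualify. Summing over $s \in \{1, \ldots, n_1\}$ gives the initial upper bound. When $\Delta \geq 2$ the per-row count is at most $n_2 + 1$, so the total $n_1(n_2 + 1)$ is bounded above by $n_1 n_2 + (\Delta - 1) n_1^2$ as required. The main obstacle is the case $\Delta = 1$, where the naive row-sum $n_1(2n_2 + 1)$ exceeds the claimed $n_1 n_2$; here one must refine the count, either by exploiting the primitivity constraint $\gcd(r,s) = 1$ together with the sign identification to remove duplicate slopes, or by applying Corollary~\ref{cor:twofillings} between pairs of bad slopes (any two bad slopes are mutually admissible inputs, since by Lemma~\ref{lem:exceptionalisnormdegen} each carries a norm-reducing class with positive wrapping number) to rule out configurations that would saturate the naive count.
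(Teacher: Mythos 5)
Your overall strategy coincides with the paper's: two applications of Corollary~\ref{cor:twofillings}, one for each $a_i$, reduce the problem to counting slopes $b$ with $\Delta(a_1,b)\leq 1+x(\hat{y}_1)$ and $\Delta(a_2,b)\leq 1+x(\hat{y}_2)$, and the paper then finishes with a purely lattice-theoretic count which it isolates as Lemma~\ref{lem:boudinglatticepoints}. The first half of your argument is correct and matches the paper's; the hypotheses of Corollary~\ref{cor:twofillings} are indeed supplied by the definition of the slopes being counted together with the assumptions on $a_1,a_2$.

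The gap is in the counting step, and you have correctly located it at $\Delta(a_1,a_2)=1$, but neither of your proposed rescues is carried out and neither obviously works. The sign identification is already spent once you restrict to $s\geq 1$: each slope other than $a_1$ then has exactly one representative in your fundamental domain, and for fixed $s$ the integer $r$ genuinely ranges over an interval of length $2n_2$ containing up to $2n_2+1$ values; primitivity does not cut this to $n_2$ in general (with $n_1=n_2=1$, $a_1=(1,0)$, $a_2=(0,1)$, both $(1,1)$ and $(1,-1)$ are primitive, distinct, and satisfy the constraints, against a target of $n_1n_2=1$). Your second suggestion, applying Corollary~\ref{cor:twofillings} between two bad slopes, would require the core of each bad filling to have \emph{non-zero wrapping number} with respect to some class of that filled manifold; this does not follow merely from the existence of a norm-reducing class (Lemma~\ref{lem:exceptionalisnormdegen} gives $\wrap>\wind$ for the exceptional class, but one still needs the relevant second homology to see $T$ appropriately), so that route needs additional input as well. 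For comparison, the paper's Lemma~\ref{lem:boudinglatticepoints} bounds the first coordinate via $s|x|\leq|ry-sx|+r|y|\leq C+(s-1)B$ and then multiplies by the number of admissible $y$; you should scrutinize that final multiplication yourself, since in the example above the trapezoid contains two primitive classes while the claimed bound is $BC=1$, so the difficulty you encountered at $\Delta(a_1,a_2)=1$ is not an artifact of your particular parametrization.
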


\begin{proof}
By Corollary~\ref{cor:twofillings}, if $b$ is a slope in $T$ such that $N_T(b)$ is irreducible, $\bdry$--irreducible, and has a norm-degenerating slope for the core of the filling, then
\[ \Delta(a_1,b) \leq 1 + x(\hat{y}_1)  \quad \mbox{ and } \quad \Delta(a_2, b) \leq 1 + x(\hat{y}_2).\]
Then Lemma~\ref{lem:boudinglatticepoints} below gives that the number of slopes $b$ satisfying these constraints is at most 
\[
(1 + x(\hat{y}_1))(1 + x(\hat{y}_2)) + (\Delta(a_1, a_2)-1)(1 + x(\hat{y}_1))^2.
\]
\end{proof}

\begin{lemma}\label{lem:boudinglatticepoints}
Given slopes $b,c$ in $T$ with $\Delta(b,c)\geq1$ and positive numbers $B, C$, then the number of slopes $a$ in $T$ such that $\Delta(a,b) \leq B$ and $\Delta(a,c) \leq C$ is at most $BC+(\Delta(b,c)-1)B^2$.
\end{lemma}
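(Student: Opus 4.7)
The plan is to reduce the lemma to a lattice-point count in a parallelogram via a suitable change of basis of $H_1(T;\Z) \cong \Z^2$. First I would choose a basis so that the slope $b$ is represented by the primitive vector $(1,0)$. Since $c$ is primitive with $\Delta(b,c)=\delta\geq 1$, it is then represented by a vector of the form $(r,\delta)$ for some integer $r$. Modifying the second basis vector by an integer multiple of the first — an operation that fixes the slope $b$ — lets me arrange $0\leq r<\delta$.

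Next I would identify each slope $a$ with a primitive vector $(p,q)\in\Z^2$ up to sign, taking the canonical representative with $q\geq 0$ (and $(p,q)=(1,0)$ when $q=0$). Under this identification one computes
\[
\Delta(a,b)=q, \qquad \Delta(a,c)=|p\delta-qr|,
\]
so the constraints of the lemma become $0\leq q\leq B$ and $|p\delta-qr|\leq C$. The lemma therefore reduces to bounding the number of primitive lattice points in the parallelogram
\[
P = \{(p,q)\in\R^2 : 0\leq q\leq B,\ |p\delta-qr|\leq C\}.
\]

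I would then enumerate by stratifying over values of $q$. The level $q=0$ contributes at most one slope. For each integer $q$ with $1\leq q\leq\lfloor B\rfloor$, the inequality $|p\delta-qr|\leq C$ forces $p$ to lie in an interval of length $2C/\delta$, so there are at most $\lfloor 2C/\delta\rfloor+1$ integer values of $p$; imposing primitivity only decreases the count.

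The main obstacle is repackaging this per-level count into the asymmetric form $BC+(\delta-1)B^2$. My plan is to split on the relative sizes of $C$ and $\delta$: when $2C<\delta$ each $q$-level contributes at most one slope, so the total is bounded by $B+1$, which is easily absorbed; when $2C\geq\delta$ each level contributes at most $\lfloor 2C/\delta\rfloor+1$ slopes, and one must argue that summing this over $1\leq q\leq B$ fits inside $BC+(\delta-1)B^2$. The $(\delta-1)B^2$ correction should absorb overcounts coming from the $+1$ in $\lfloor 2C/\delta\rfloor+1$ and from the rounding behavior of $qr\bmod\delta$ as $q$ varies through residues modulo $\delta$. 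I expect the hardest case to be $\delta=1$, where the correction term vanishes and the entire bound must come from the $BC$ term, requiring the tightest accounting of the lattice points at each level.
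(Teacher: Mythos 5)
Your change of basis, the formulas $\Delta(a,b)=q$ and $\Delta(a,c)=|p\delta-qr|$, and the per-level count of at most $\lfloor 2C/\delta\rfloor+1$ values of $p$ for each integer $q$ with $1\leq q\leq\lfloor B\rfloor$ are all correct, and up to that point your argument coincides with the paper's (which bounds the same trapezoid of lattice points all at once rather than level by level). The gap is exactly where you located it: the ``repackaging'' of the total $1+\lfloor B\rfloor\bigl(\lfloor 2C/\delta\rfloor+1\bigr)$ into $BC+(\delta-1)B^2$. That step cannot be completed, because the inequality you would need is false. Take $\delta=1$, so $b=(1,0)$ and $c=(0,1)$, and take $B=1$ and $C$ a large integer. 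Every vector $(p,1)$ with $|p|\leq C$ is primitive, and these give $2C+1$ pairwise distinct slopes, each satisfying $\Delta(a,b)=1\leq B$ and $\Delta(a,c)=|p|\leq C$, while the claimed bound is $BC+(\delta-1)B^2=C$. So your suspicion that $\delta=1$ would need ``the tightest accounting'' understates the problem: at level $q=1$ every integer in an interval of length $2C/\delta$ gives a primitive vector, so the true count is roughly $2BC/\delta$, and no accounting recovers the missing factor of $2$ (primitivity only saves a density factor of $6/\pi^2$, which exceeds $1/2$).

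You should not feel obliged to reproduce the stated constant, because the paper's own proof has the same defect: it asserts $|\Lambda|\leq B\cdot(C+sB)$ for the lattice points of the trapezoid $\{(x,y)\colon |y|\leq B,\ |ry-sx|\leq C,\ x\geq0\}$, which contains on the order of $(2B+1)(2C/s+1)$ such points, and in any case $B(C+sB)=BC+sB^2$, not $BC+(s-1)B^2$. The honest output of your computation --- that the number of admissible slopes is at most $1+\lfloor B\rfloor\bigl(\lfloor 2C/\Delta(b,c)\rfloor+1\bigr)\leq 1+B+2BC$ --- is a correct and complete proof of a corrected statement, and it is all that is needed downstream: Corollary~\ref{cor:boundsondegenerateslopes} and Theorem~\ref{thm:main} only require an explicit finite bound, so the right fix is to weaken the lemma to your bound and propagate the new constant into Corollary~\ref{cor:boundsondegenerateslopes}.
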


\begin{proof}
Let us regard slopes as being represented by oriented simple closed curves. We may choose a basis for $H_1(T)$ in which $[b] = (1,0)$ and $[c] = (r,s)$ for coprime integers $0\leq r<s$.   Then $\Delta(b,c) = s$.  For any slope $a$ in $T$, we may choose an orientation of the curve so that the constraints $\Delta(a,b) \leq B$ and $\Delta(a,c) \leq C$ and the orientation restrict its representatives in this homology basis to an element of the set $\Lambda$ of integer lattice point in the trapezoid $\{(x,y) \colon |y|\leq B, |ry-sx|\leq C, x\geq0 \}$. For points $(x,y) \in \Lambda$, one deduces that 
\[ 0\leq x \leq s |x| \leq |ry-sx| + r|y| \leq C + rB \leq C+(s-1)B = C+(\Delta(b,c)-1)B.\]
Thus $|\Lambda| \leq B \cdot (C+sB) = BC+(\Delta(b,c)-1)B^2$, giving an upper bound on the number of slopes $a$ in $T$ satisfying the constraints.
\end{proof}

\begin{theorem}\label{thm:main}
Let $N$ be a compact, connected, orientable, irreducible, and $\boundary$--irreducible $3$--manifold whose boundary is a union of tori. Then either 
\begin{enumerate}
\item $N$ is a product of a torus and an interval,
\item $N$ is a cable space, or 
\item for each torus component $T \subset \bdry N$ there is a finite set of slopes $\calR=\calR(N,T)$ in $T$ such that if $b \not \in \calR$ then $b$ is not norm-reducing.
 \end{enumerate}
\end{theorem}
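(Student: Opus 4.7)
The plan is to prove the contrapositive: assume $N$ is neither $T^2 \times I$ nor a cable space, and show that for each torus component $T \subseteq \bdry N$, the set $\calR(N,T)$ of norm-reducing slopes is finite. Fix such $T$. The key preliminary observation is that for $\hat z \in H_2(N_T(b),\bdry N_T(b))$ and $z = \rho_b(\hat z)$, the class $\bdry_T z \in H_1(T)$ equals $k[b]$ for some integer $k$ with $\wind_{K_b}(\hat z) = |k|$; hence $\wind_{K_b}(\hat z) = 0$ precisely when $z$ lies in the image of $H_2(N, \bdry N - T) \to H_2(N, \bdry N)$, whereas $\wind_{K_b}(\hat z) > 0$ forces $[b] \in L := \operatorname{image}(\bdry_T \colon H_2(N, \bdry N) \to H_1(T))$. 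This splits the norm-reducing classes for a slope $b$ into two types to be handled separately.

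For the winding-zero norm-reducing classes, Equation~(\markc) reduces to Sela's equation $x(z) = x(\hat z)$, so Sela's theorem (applicable since $N$ is irreducible) produces a finite set $\calR_{\mathrm{Sela}} \subset T$ of slopes outside of which no winding-zero norm-reducing class exists. A winding-positive class requires $[b] \in L$, so if $\operatorname{rank} L \leq 1$ then at most one slope on $T$ meets this condition, and $\calR(N,T)$ is contained in $\calR_{\mathrm{Sela}}$ together with that possible extra slope, which is finite.

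The main case is $\operatorname{rank} L = 2$, where every slope potentially carries winding-positive classes. I would invoke Corollary~\ref{cor:boundsondegenerateslopes}: pick two reference slopes $a_1, a_2$ on $T$ with $\Delta(a_1, a_2) \geq 1$ such that each $N_T(a_i)$ is irreducible and $\bdry$-irreducible. The existence of infinitely many such slopes follows from classical Dehn filling finiteness theorems (Gordon--Luecke, Wu, Hatcher, Scharlemann, and the like), which apply precisely because $N$ is not $T^2 \times I$ or a cable space. Since $\operatorname{rank} L = 2$, the core $K_{a_i}$ has positive winding, and hence positive wrapping, on some class $\hat y_i \in H_2(N_T(a_i), \bdry N_T(a_i))$. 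Corollary~\ref{cor:boundsondegenerateslopes} then gives an explicit finite bound on the number of slopes $b$ that yield irreducible, $\bdry$-irreducible $N_T(b)$ carrying a norm-reducing class. Adjoining this bounded set to $\calR_{\mathrm{Sela}}$ and to the finite set of slopes producing reducible or $\bdry$-reducible $N_T(b)$ gives $\calR(N,T)$ finite.

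The main obstacle is securing the two reference slopes in the $\operatorname{rank} L = 2$ case, which reduces to the classical Dehn filling finiteness bounds cited above; it is exactly these bounds that force the exclusion of $T^2 \times I$ and cable spaces, so justifying this step is what makes the exclusion of these two families of $N$ both natural and necessary. Secondary care is required in pinning down the $\wind = 0$ versus $\wind > 0$ dichotomy via $L$ and in confirming that the chosen reference fillings supply classes of positive wrapping.
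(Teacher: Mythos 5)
Your proposal is correct and follows essentially the same route as the paper: split on the rank of the image of $\bdry_T$ in $H_1(T)$, dispose of the winding-zero (equivalently, rank $\leq 1$) situation via Sela's theorem, and in the surjective case feed two reference slopes with irreducible, $\bdry$--irreducible fillings and positive wrapping into Corollary~\ref{cor:boundsondegenerateslopes}, using the classical finiteness of reducible and $\bdry$--reducible filling slopes (which is exactly where $T^2\times I$ and cable spaces must be excluded). The only cosmetic difference is that you invoke Sela in the rank-two case as well, which is harmless but redundant since Corollary~\ref{cor:boundsondegenerateslopes} already bounds all norm-reducing slopes there.
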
 

\begin{proof}%[Proof of Theorem~\ref{thm:main}]
Let $T$ be a particular component of $\bdry N$.  By \cite{hoffmanmatignon, GordonLueckeReducible}, 
$N_T(a)$ is a irreducible for at most three slopes $a$.   
By \cite[Corollary 2.4.4]{CGLS}, 
unless $N \cong T \times [0,1]$ or $N$ is a cable space, $N_T(a)$ is $\bdry$--irreducible for at most three slopes $a$.
Hence, we now assume $N$ is neither homeomorphic to $T \times [0,1]$ nor a cable space, so that there are at most $6$ slopes in $T$ for which $N_T(a)$ is reducible or $\bdry$--irreducible.

Let $(\bdry_T)_*\co H_2(N,\boundary N) \to H_1(T)$ be the composition of the boundary map on $H_2(N,\boundary N)$ with the projection from $H_1(\boundary N)$ to $H_1(T)$. For every slope $a$ in $T$ that generates a rank $1$ subspace of the image of $(\bdry_T)_*$ in $H_1(T)$, there is some class $\hat{y} \in H_2(N_T(a), \bdry N_T(a))$ such that $\wind_a(\hat{y}) >0$.  Since $\wind_a$ gives a lower bound on $\wrap_a$, the core of the Dehn filling $N_T(a)$ has non-zero wrapping number with respect to the class $\hat{y}$.  Therefore, if $(\bdry_T)_*$ surjects onto $H_1(T)$, the core of any Dehn filling of $N$ along $T$ will have non-zero wrapping number with respect to some class in the filled manifold.   In this case we may find a pair of slopes satisfying the hypotheses of Corollary~\ref{cor:boundsondegenerateslopes} so that the number of norm-reducing, but irreducible, and $\bdry$--irreducible slopes is finite.  Since the number of reducible or $\bdry$--reducible slopes in $T$ is also finite, we have our conclusion.

On the other hand, if $(\bdry_T)_*$ does not surject onto $H_1(T)$, its image must be a rank $1$ subspace generated by a single slope, say $b$.  For every other slope $a \neq b$, $\wind_a = 0$.  Hence for all $a \neq b$, $\rho_a$ gives an isomorphism $H_2(N_T(a), \bdry N_T(a)) \cong H_2(N, \bdry N - T)$.  Then it follows from \cite{Sela} (but using  \cite[Corollary 2.4]{G2} instead of just \cite[Theorem 1.8]{G2} to avoid hypotheses of atoroidality, see also \cite[Theorem A.21]{LackenbyDSOK}) that there are finitely many norm reducing fillings.
\end{proof}

\subsection{The dual norm}
	 As we observed in the introduction, Theorem~\ref{thm:dualnorm} shows that, in general, there are no norm-degenerating classes with respect to a knot that is surgery dual to a knot with ``large'' dual Thurston norm, quantified in terms of the distance of the surgery. 

\begin{theorem}\label{thm:dualnorm}
Assume that every sphere, disc, annulus, and torus in $M'$ separates. Given a class $\alpha \in H_1(M';\Z)$ and an integer $\Delta$, if
\[
(\Delta-1)x^*(\alpha) > 1 
\]
then no  Dehn surgery of distance $\Delta$ on a knot representing $\alpha$ produces an irreducible, $\bdry$--irreducible $3$--manifold $M$ which has a norm-degenerating class with respect to the core of the surgery.
\end{theorem}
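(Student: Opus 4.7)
The plan is to argue by contrapositive: I will assume some Dehn surgery of distance $\Delta$ on a knot $K' \subset M'$ representing $\alpha$ yields an irreducible, $\bdry$--irreducible $3$--manifold $M$ with a norm-reducing class with respect to the surgery core $K$, and deduce that $(\Delta-1)x^*(\alpha) \le 1$. Following the paper's conventions I set $N = M' \setminus \nbhd(K')$, with $K' = K_a$ and $M' = N_T(a)$, and $K = K_b$ with $M = N_T(b)$, so that $\Delta(a,b) = \Delta$.

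The heart of the argument is to invoke Theorem~\ref{thm:BoundingThurstonNorm}. Because $M$ is irreducible and $\bdry$--irreducible, Lemma~\ref{lem:exceptionalisnormdegen} upgrades the hypothesized norm-reducing class to an exceptional class in $H_2(M,\bdry M)$ with respect to $K$; this forces $H_2(M,\bdry M) \neq 0$, so $M$ is not a rational homology sphere, and the irreducibility of $M$ rules out proper rational homology sphere summands. The $\bdry$--irreducibility of $M$ keeps $M$ from being a solid torus, and $N$ is automatically irreducible because $M$ is. For the $\bdry$--irreducibility of $N$, the $\bdry M$ portion of $\bdry N$ remains incompressible in $N$ since $M$ is $\bdry$--irreducible, and a compressing disc for $T = \bdry\nbhd(K')$ in $N$ would, via the standard regular-neighborhood argument, exhibit a punctured solid torus inside $N$ with a sphere boundary; irreducibility of $N$ then forces $N$ itself to be a solid torus, which would make $M = N_T(b)$ a genus-one Heegaard splitting of a closed manifold, hence either reducible ($S^1 \times S^2$) or with $H_2 = 0$ ($S^3$ or a lens space), contradicting the exceptional-class property of $M$. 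With all the hypotheses verified, Theorem~\ref{thm:BoundingThurstonNorm} produces the inequality
\[
x(\hat y) \;\ge\; \wrap_{K'}(\hat y)\,(\Delta-1) \quad \text{for every } \hat y \in H_2(M', \bdry M').
\]

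The last step is to translate this into a bound on the dual norm. Since algebraic intersection underestimates geometric, $\wrap_{K'}(\hat y) \ge \wind_{K'}(\hat y) = |\alpha \cdot \hat y|$, and so $(\Delta-1)|\alpha \cdot \hat y| \le x(\hat y)$ for every integer class $\hat y$. Rescaling and continuity extend this inequality to every $\sigma \in H_2(M', \bdry M'; \R)$. The hypothesis that every sphere, disc, annulus, and torus in $M'$ separates makes $x$ a genuine norm, so for any $\sigma$ with $x(\sigma) \le 1$ we obtain $|\alpha \cdot \sigma| \le 1/(\Delta-1)$; taking the supremum yields $x^*(\alpha) \le 1/(\Delta-1)$, contradicting $(\Delta-1)x^*(\alpha) > 1$. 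The main obstacle will be the $\bdry$--irreducibility check for $N$ outlined above: it requires using the exceptional-class hypothesis to exclude the degenerate genus-one Heegaard splitting case, and is the part of the argument where a careful bookkeeping of the boundary components of $N$ and the topological consequences of irreducibility must be combined.
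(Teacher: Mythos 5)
Your argument is correct in outline and reaches the theorem by a genuinely different route from the paper's. The paper works in the forward direction: it uses $(\Delta-1)x^*(\alpha)>1$ together with compactness of the unit norm ball of $x$ on $H_2(M',\bdry M';\R)$ --- this is where the separating hypothesis on $M'$ enters --- to extract, via an $\epsilon$--approximation by rational classes, an integral class $\hat{z}$ and a taut representative $\hat{Q}$ minimizing $|\hat{Q}\cap K'|$ with $-\chi(\hat{Q}) < |\hat{Q}\cap K'|(\Delta-1)$; it then checks via Lemma~\ref{no gabai disc} that $\hat{Q}$ admits no multi-$\bdry$-compressing disc and feeds $\hat{Q}$ directly into Theorem~\ref{Previous Sutured Thm}. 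You instead take the contrapositive and reuse Theorem~\ref{thm:BoundingThurstonNorm}, whose conclusion $x(\hat{y})\ge \wrap_{K'}(\hat{y})(\Delta-1)$ holds for \emph{every} class $\hat{y}\in H_2(M',\bdry M')$; combined with $\wrap_{K'}(\hat{y})\ge\wind_{K'}(\hat{y})=|\alpha\cdot\hat{y}|$ and homogeneity/continuity, this bounds $x^*(\alpha)$ by $1/(\Delta-1)$ with no approximation argument and without ever needing the supremum defining $x^*$ to be attained. Since Theorem~\ref{thm:BoundingThurstonNorm} is itself proved by exactly the surface construction plus Theorem~\ref{Previous Sutured Thm}, the two arguments rest on the same machinery; but yours is tidier, as it delegates the surface construction to an already-proved statement and dispenses with the compactness and $\epsilon$--bookkeeping entirely. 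Your hypothesis-checking for Theorem~\ref{thm:BoundingThurstonNorm} (exceptional class via Lemma~\ref{lem:exceptionalisnormdegen}, hence $H_2(M,\bdry M)\ne 0$; no proper summands since $M$ is irreducible hence prime; $M$ not a solid torus by $\bdry$-irreducibility; the genus-one Heegaard splitting exclusion) is also correct.

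The one step that deserves more care is the claim that ``$N$ is automatically irreducible because $M$ is.'' A sphere in $N$ bounds a ball in $M$, but that ball could contain $K$, in which case the sphere need not bound a ball in $N$. You can close this as follows: if $K$ lay in a ball $B\subset M$, then every taut (hence incompressible) representative of a class in $H_2(M,\bdry M)$ could, by an innermost-disc argument using the irreducibility of $M$, be isotoped off the sphere $\bdry B$ and hence (after discarding separating components) off $B\supset K$; this would give $\wrap_K\equiv 0=\wind_K$, contradicting the existence of an exceptional class. So $K$ lies in no ball and $N$ is irreducible. (The paper sidesteps this entirely by taking the irreducibility of $N$ as a standing convention, so you are not worse off than the source.) With that patch the argument goes through.
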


\begin{proof}
Assume 
$
(\Delta-1)x^*(\alpha) > 1
$
so that $\Delta \geq 2$ and $x^*(\alpha)-1/(\Delta-1) > 0$.

Since $M'$ contains no non-separating sphere, disc, annulus, or torus, the Thurston norm on $M'$ is actually a norm and not just a pseudo-norm. Thus, the unit norm ball in $H_2(M',\boundary M')$ is compact and $x^*(\alpha) = \sup_{x(\tau)=1} |\alpha \cdot \tau|$. Since $x^*$ is continuous,
there exists a class $\sigma \in H_2(M', \boundary M'; \R)$ realizing this supremum, i.e.\ such that $x(\sigma) = 1$ and $x^*(\alpha) = |\alpha \cdot \sigma|$.
For any $\epsilon > 0$, there is a rational class $\hat{z}' \in H_2(M', \bdry M' ; \Q)$ approximating $\sigma$ such that $ x(\hat{z}') = 1$ and 
\[ |\alpha \cdot \sigma| \geq |\alpha \cdot \hat{z}'| > |\alpha \cdot \sigma| - \epsilon.\]
In particular, since $(\Delta-1)x^*(\alpha)>1$, let us choose $\epsilon$ so that $x^*(\alpha)-1/(\Delta-1) > \epsilon >0$.

Since $|\alpha \cdot \tau| / x(\tau)$ is constant for non-zero multiples of any non-zero class $\tau \in H_2(M,\bdry M;\R)$, there exists an integral class $\hat{z} \in H_2(M,\bdry M;\Z)$ that is a positive multiple of the rational class $\hat{z}'$ for which 
\[ |\alpha \cdot \sigma| \geq \frac{|\alpha \cdot \hat{z}|}{x(\hat{z})} > |\alpha \cdot \sigma| - \epsilon.\]
Being an integral class, $\hat{z}$ is represented by a surface.  For any taut surface $\hat{Q}$ representing $\hat{z}$ we have $x(\hat{z})=-\chi(\hat{Q})$ and $|\alpha \cdot \hat{z}| = \wind_\alpha (\hat{Q})$. 

Now let $K'$ be any knot representing $\alpha$.
Among the taut surfaces representing $\hat{z}$, choose $\hat{Q}$ to be one that  minimizes $|\hat{Q} \cap K'|$.
Thus $\wrap_{K'}(\hat{Q}) \geq \wind_{K'}(\hat{Q})  = |K' \cdot \hat{Q}| = |\alpha \cdot \hat{z}|$.

 Hence  by the choice of $\sigma$,
\[ x^*(\alpha)\geq \frac{\wind_\alpha (\hat{Q})}{-\chi(\hat{Q})} >x^*(\alpha) - \epsilon. \tag{\markd}\]

Since  $x^*(\alpha)-1/(\Delta-1) \geq \epsilon > 0$, we have $(\Delta-1)(x^*(\alpha) - \epsilon) \geq 1$ and thus the right hand inequality of (\markd) gives
\[(\Delta-1)\frac{\wind_\alpha (\hat{Q})}{-\chi(\hat{Q})} >(\Delta-1)(x^*(\alpha) - \epsilon) \geq 1. \]
Consequently,
\[
(\Delta - 1)|K' \cap \hat{Q}| 
= (\Delta - 1) \wrap_{K'}(\hat{Q})
 \geq (\Delta - 1)\wind_\alpha(\hat{Q}) > - \chi(\hat{Q})
\]
By the choice of $\hat{Q}$ and Lemma \ref{no gabai disc}, there is no multi-$\boundary$-compressing disc for $\wihat{Q}$. Thus, by Theorem \ref{Previous Sutured Thm}, if $M$ is obtained by a distance $\Delta$ Dehn surgery on $K'$, then $H_2(M, \boundary M)$ cannot contain a norm-degenerating class with respect to the core of the surgery.
\end{proof}

\section{Genus growth in twist families.}

Let $Y$ be a closed, compact, connected, oriented, irreducible, $3$--manifold with $H_2(Y) = 0$.
Let $\{K_n\}$ be a twist family of null-homologous knots in $Y$ obtained by twisting a null-homologous knot $K=K_0$ along an unknot $c$.
That is, $K_n$ is the knot in $Y=Y_c(-1/n)$ obtained by $-1/n$--surgery on $c$ for each integer $n$.  Let $g(K_n)$ be the Seifert genus of $K_n$ and set $\omega = |\lk(K, c)|$.

\begin{theorem}\label{thm:twistfamily}
	If $|\lk(K,c)|>0$, then $\lim\limits_{n\to \infty} g(K_n) =\infty$ unless $c$ is a meridian of $K$.
\end{theorem}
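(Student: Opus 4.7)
The plan is to work in the link exterior $X = Y \setminus (\nbhd(K) \cup \nbhd(c))$, which has boundary tori $T_K$ and $T_c$. Since $\omega = |\lk(K,c)| > 0$, the link $K \cup c$ is non-split and both $T_K$ and $T_c$ are incompressible in $X$, so $X$ is irreducible and $\boundary$-irreducible. The knot exterior of $K_n$ in $Y$ is homeomorphic to the Dehn filling $X_{T_c}(-1/n)$, and a Seifert surface of $K_n$ determines a class $\hat{z}_n \in H_2(X_{T_c}(-1/n), \boundary)$ with $x(\hat{z}_n) = 2g(K_n) - 1$. Since the case $\omega \geq 2$ is proved in~\cite{bakermotegi}, I will assume $\omega = 1$ and that $c$ is not a meridian of $K$.

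Next I would identify the preimage $z_n = \rho_{-1/n}(\hat{z}_n) \in H_2(X, \boundary X)$ explicitly. Restricting a Seifert surface of $K_n$ to $X$ yields boundary $\lambda_{K_n} = \lambda_K - n\omega^2 \mu_K$ on $T_K$ (the twisted Seifert framing after $n$ twists about $c$), together with $\omega$ copies of the slope $-1/n$ curve on $T_c$ (using the identity $\lk(K_n, c_n) = \lk(K,c) = \omega$). Letting $y, w \in H_2(X, \boundary X)$ be the classes represented by Seifert surfaces of $K$ and $c$ restricted to $X$, with boundaries $\lambda_K - \omega\mu_c$ and $\lambda_c - \omega\mu_K$ respectively, a direct computation shows $\partial z_n = \partial(y + n\omega w)$. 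The kernel of the boundary map is generated by the boundary-torus class $[T_K]$, which has Thurston norm zero, so $x_X(z_n) = x_X(y + n\omega w)$. Under the standing hypothesis, a direct case analysis rules out the exceptional alternatives of Theorem~\ref{thm:main}: $X \cong T^2 \times I$ would force $Y \cong S^3$ with $K \cup c$ a Hopf link (so $c$ would be a meridian of $K$), and the cable-space case is excluded by a similar direct verification using the Seifert structure of $X$. Theorem~\ref{thm:main} therefore provides a finite exceptional set of slopes, so for all but finitely many $n$, Equality~(\markc) combined with $\wind_{c_n}(\hat{z}_n) = 1$ gives $2g(K_n) = x_X(z_n) = x_X(y + nw)$.

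The main obstacle is to establish that $x_X(w) > 0$: given this, the triangle inequality $x_X(y + nw) \geq |n| \, x_X(w) - x_X(y)$ forces $g(K_n) \to \infty$. Suppose for contradiction that $x_X(w) = 0$. Then $w$ admits a representative all of whose components are spheres, discs, annuli, or tori. Closed components (spheres and tori) contribute nothing to $\partial w = \lambda_c - \mu_K$, so the remaining discs and annuli must have boundaries summing to $\lambda_c - \mu_K$. Incompressibility of $T_K$ rules out any disc bounded by $\mu_K$, while $\lk(K,c) \neq 0$ rules out any disc bounded by $\lambda_c$; the representative must therefore contain an annulus with one boundary $\mu_K$ on $T_K$ and the other $\lambda_c$ on $T_c$. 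But such an annulus exhibits $c$ (which is isotopic to $\lambda_c$ through $\nbhd(c)$) as isotopic to the meridian $\mu_K$ of $K$ inside $Y \setminus K$, contradicting our hypothesis. Hence $x_X(w) > 0$, and $g(K_n) \to \infty$ as claimed.
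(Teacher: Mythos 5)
Your overall route is close to the paper's: restrict Seifert surfaces to the link exterior, show the restricted classes form the affine ray $z_n = z_0 + n\omega[D]$, invoke Theorem~\ref{thm:main} to convert $x(\hat z_n)$ into $x(z_n)$ for all but finitely many $n$, and finish by showing the class of the punctured disc has positive norm. The tactical differences (reducing to $\omega=1$ via \cite{bakermotegi}, and using the triangle inequality $x(y+nw)\ge n\,x(w)-x(y)$ instead of the telescoping identity that yields the exact growth rate $2g(K_{n+1})-2g(K_n)=\omega\,x([D])$) are fine, though they give only the qualitative divergence rather than the precise linear formula of Theorem~\ref{thm:twistfamilygrowth}. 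Your argument that $x([D])>0$ (a norm-zero representative of a class with boundary $\lambda_c-\mu_K$ forces an essential annulus from $\mu_K$ to $\lambda_c$, hence $c$ is a meridian) is sound once one notes that all essential boundary curves of the annuli on a given torus share a slope and that same-torus annuli contribute even multiples of that slope, so a spanning annulus must occur; this is in fact a more explicit justification than the paper's one-line ``$x([D])=0$ implies $D$ is an annulus.''

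The genuine gap is your opening claim that $\omega>0$ forces $X$ to be irreducible and $\boundary$--irreducible. The standing hypothesis of this section is only that $Y$ is closed, oriented, irreducible with $H_2(Y)=0$, not that $Y\cong S^3$. If $K\cup c$ lies in a ball $B\subset Y$ with $Y\not\cong S^3$, then the sphere $\boundary B$ does not bound a ball on either side in $X$, so $X$ is reducible even though the link is non-split; Theorem~\ref{thm:main} (and Theorem~\ref{Previous Sutured Thm} behind it) requires irreducibility, so your appeal to it does not go through in this case. The paper repairs exactly this by writing $N=N'\#Y$ with $N'$ the irreducible exterior of $K\cup c$ in $S^3$ and observing the closed summand does not affect the genera $g(K_n)$; you need the same step. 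Relatedly, your dismissal of the cable-space alternative deserves a sentence: for $\omega\ge2$ a cable space genuinely occurs ($K$ a torus knot in the solid torus exterior of $c$) and is not ``excluded'' but handled directly in the paper; for your $\omega=1$ reduction it is excluded only because the winding number of $K$ in $Y-\nbhd(c)$ equals $\omega=1$, which is incompatible with a cable space of index $p\ge2$ --- say that explicitly rather than gesturing at the Seifert structure.
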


\begin{proof}
	This follows as a corollary of the more precise Theorem~\ref{thm:twistfamilygrowth} below which implies the limit is finite only if $\omega x([D]) = 0$.  Here $x$ is the Thurston norm on the exterior of the link $K \cup c$ and $[D]$ is the homology class of a disk bounded by $c$, intersected by $K$, and restricted to this exterior.  Since $\omega = |\lk(K,c)| >0$, the limit is finite only if $x([D]) = 0$.  This however implies that $D$ is an annulus and hence $c$ is a meridian of $K$.
\end{proof}

	Let $N=Y-\nbhd(K\cup c)$ be the exterior of the link $K\cup c$ with boundary components $T_K$ and $T_c$ corresponding to $K$ and $c$ respectively, and use the standard associated meridian-longitude bases relative to $K$ and $c$ for these tori.    Then the exterior of $K_n$ is the manifold  $Y-\nbhd(K_n)=N_{T_c}(-1/n)$ which results from Dehn filling $N$ along the slope $-1/n$ in $T_c$; let $c_n$ be the core of this filling, setting $c=c_0$.

Let $\hat{D}$ be a disk bounded by $c$ that is transverse to $K$ and set $D = \hat{D} \cap N$.
%Let $\hat{F}_0$ be a Seifert surface for $K_0$ that is transverse to $c_0$ and set $F_0 = \hat{F}_0 \cap N$.
Let $\hat{F}_n$ be a Seifert surface for $K_n$ that is transverse to $c_n$ and set $F_n = \hat{F}_n \cap N$.

\begin{lemma}\label{lem:homologyoftwists}
$[F_{n+1}] = [F_n] + \omega [D]$ for all integers $n$.
\end{lemma}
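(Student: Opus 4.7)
My plan is to reduce the claim to a boundary computation: I would show that $[F_{n+1}]$ and $[F_n] + \omega[D]$ have the same image under the boundary map $\bdry \co H_2(N, \bdry N) \to H_1(\bdry N)$, and then argue that $\bdry$ is injective so that equality of boundaries forces equality of classes in $H_2(N, \bdry N)$.

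For the boundary computation I would work in the meridian-longitude bases $(\mu_K, \lambda_K)$ of $T_K$ and $(\mu_c, \lambda_c)$ of $T_c$. Since $\lk(K_n, c_n) = \pm\omega$, the boundary of $F_n$ on $T_c$ consists of $\omega$ signed copies of the $-1/n$ slope, which in $H_1(T_c)$ is $\omega \gamma_n$ where $\gamma_n = -\mu_c + n\lambda_c$; on $T_K$, $\bdry F_n$ is the preferred longitude $\ell_n$ of $K_n$, equal to $\lambda_K + n\omega^2 \mu_K$ by the standard framing-shift under a $-1/n$ Rolfsen twist. For the disk, $\bdry D$ on $T_K$ is $\omega \mu_K$ (from the $\omega$ transverse intersections of $\hat{D}$ with $K$), while $\bdry D$ on $T_c$ is $\lambda_c$ (since $\bdry \hat{D}=c$ restricts to a parallel longitude). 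A direct check then yields $\bdry_{T_K}([F_{n+1}] - [F_n] - \omega[D]) = \omega^2\mu_K - \omega^2\mu_K = 0$ and $\bdry_{T_c}([F_{n+1}] - [F_n] - \omega[D]) = \omega(\gamma_{n+1} - \gamma_n - \lambda_c) = 0$ since $\gamma_{n+1} - \gamma_n = \lambda_c$.

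To conclude I would invoke the long exact sequence of the pair $(N, \bdry N)$ to show $\bdry$ is injective. This reduces to surjectivity of $H_2(\bdry N) \to H_2(N)$, which follows from a Mayer-Vietoris computation for $Y = N \cup (\nbhd(K) \sqcup \nbhd(c))$ combined with the hypothesis $H_2(Y) = 0$: one finds $H_2(N) \cong \Z$ generated by $[T_K] = -[T_c]$, visibly in the image of $H_2(\bdry N)$. The main obstacle is verifying the framing-shift formula $\ell_n = \lambda_K + n\omega^2 \mu_K$ with the correct sign, since this requires careful orientation bookkeeping across the twist. One can avoid appealing to external formulas by observing directly that $\ell_n$ must be null-homologous in $Y - \nbhd(K_n) = N_{T_c}(-1/n)$, so it arises as the $T_K$-boundary of some class in $H_2(N, \bdry N)$ whose $T_c$-boundary is a multiple of $\gamma_n$; expanding that class in the basis $\{[F_0], [D]\}$ of $H_2(N, \bdry N) \cong \Z^2$ and matching coefficients on $\mu_c$ and $\lambda_c$ pins down the $\mu_K$-coefficient of $\ell_n$ to be exactly $n\omega^2$.
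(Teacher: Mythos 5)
Your proposal is correct and follows essentially the same route as the paper: both arguments reduce the claim to checking that $[F_{n+1}]$ and $[F_n]+\omega[D]$ have equal image under $\bdry\co H_2(N,\bdry N)\to H_1(\bdry N)$, and both then use $H_2(Y)=0$ to rule out a nonzero difference class (the paper phrases this as ``a boundaryless surface would give a nonzero class in $H_2(Y)$,'' which is exactly the injectivity of $\bdry$ that you extract from the long exact sequence and Mayer--Vietoris). Your explicit framing computation $\ell_n=\lambda_K+n\omega^2\mu_K$ is the correct one and makes the cancellation on $T_K$ transparent.
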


\begin{proof}
	Since $Y$ is a rational homology sphere by assumption, each knot $K_n$ (and $c$) has a unique homology class of Seifert surface up to sign.  The formula then follows since $\omega = |\lk(K,c)|$ and the surfaces $F_n$ and $D$ are the restrictions of Seifert surfaces for $K_n$ and $c$ to $N$.  
	Indeed, $\bdry [F_n]$ is homologous to one longitude of slope $n \omega$ in $T_K$ and $\omega$ parallel curves of slope $-1/n$ in $T_c$ while $\bdry [D]$ is homologous to $\omega$ meridians in $T_K$ and one longitude of slope $0$ in $T_c$.  It follows that (heeding orientations) $[F_n]+\omega [D]$ is represented by  a properly embedded surface in $N$ that is the Haken sum of $F_n$ and $\omega$ parallel copies of $D$ which has boundary homologous to that of $\bdry [F_{n+1}]$. 
	If $[F_{n+1}]-[F_n]-\omega[D]$ were a non-zero class, it would be represented by a boundaryless surface in $N$ and thus represent a non-zero class in $H_2(Y)$ --- a contradiction.
		Hence $[F_{n+1}] =  [F_n] + \omega [D]$. 
\end{proof}

\begin{theorem}\label{thm:twistfamilygrowth}
	There is a constant $G=G(K,c)$ such that $2g(K_n) = 2G+n \omega  x([D])$ for sufficiently large $n>0$. 
\end{theorem}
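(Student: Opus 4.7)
The plan is to apply Theorem~\ref{thm:main} to $N = Y - \nbhd(K \cup c)$ with the distinguished torus $T = T_c$, taking Dehn fillings along the slopes $-1/n$. Assuming for the moment that $N$ is irreducible, $\boundary$--irreducible, and neither a cable space nor $T^2 \times I$, Theorem~\ref{thm:main} produces a finite set $\calR \subset T_c$ of norm-reducing slopes; for all $n$ large enough, $-1/n \notin \calR$, so that
\[
x_N(z) = x_{M_n}(\hat{z}) + \wind_{c_n}(\hat{z})
\]
for every $\hat{z} \in H_2(M_n, \boundary M_n)$ with $\rho_{-1/n}(\hat{z}) = z$, where $M_n = N_{T_c}(-1/n) = Y - \nbhd(K_n)$.

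I would apply this to the Seifert surface class $\hat{z}_n = [\hat{F}_n]$. By Lemma~\ref{lem:homologyoftwists} the restriction to $N$ satisfies $[F_n] = [F_0] + n\omega[D]$. For the right-hand side, once $g(K_n) \geq 1$ (automatic for large $n$ when $\omega\, x_N([D]) > 0$) we have $x_{M_n}(\hat{z}_n) = -\chi(\hat{F}_n) = 2g(K_n)-1$. A direct homology computation in $H_1(N)$, using $l_K = \omega m_c$ and $l_c = \omega m_K$, shows that $\boundary_{T_c} F_n$ consists of $\omega$ coherently oriented parallel copies of the slope $-1/n$ on $T_c$; after capping with meridional disks of the filling, $|\hat{F}_n \cap c_n| = \omega$ with all intersections of a common sign, hence $\wind_{c_n}(\hat{z}_n) = \omega$. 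Combining these observations gives, for all sufficiently large $n$,
\[
x_N\bigl([F_0] + n\omega[D]\bigr) = 2g(K_n) - 1 + \omega.
\]

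Now I would exploit piecewise linearity of the Thurston norm: $x_N$ is linear on the cone over each face of its (polyhedral) unit ball, so for all $n$ large enough the class $[F_0] + n\omega[D]$ lies in a single such cone, on which $x_N$ agrees with a linear functional $L$. Therefore
\[
x_N\bigl([F_0] + n\omega[D]\bigr) = L([F_0]) + n\omega\, L([D]) = L([F_0]) + n\omega\, x_N([D]),
\]
and setting $2G = L([F_0]) + 1 - \omega$ yields the desired formula $2g(K_n) = 2G + n\omega\, x_N([D])$ for all sufficiently large $n>0$.

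The main obstacle is managing the cases in which $N$ fails the hypotheses of Theorem~\ref{thm:main}. A reducible $N$ would force $\omega = 0$, whereupon Gabai's \cite[Corollary 2.4]{G2} makes $g(K_n)$ eventually constant and the formula reduces to $2g(K_n) = 2G$. The $\boundary$--reducible, cable space, and $T^2 \times I$ cases each constrain $(K,c)$ rigidly enough that $\{K_n\}$ can be described explicitly: for example, if $c$ is a meridian of $K$ then $K_n = K$, the surface $D$ is an annulus with $x_N([D])=0$, and $G = g(K)$ works. The remaining technical check is the winding number identification $\wind_{c_n}(\hat z_n) = \omega$, which pins down the $n$-dependence in the equation that feeds into the piecewise-linearity step.
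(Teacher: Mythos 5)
Your main line of argument is the same as the paper's: invoke Theorem~\ref{thm:main} for the fillings $N_{T_c}(-1/n)$ to get $x_N(z_n)=x(\hat z_n)+\wind_{c_n}(\hat z_n)=2g(K_n)-1+\omega$ for large $n$, feed in Lemma~\ref{lem:homologyoftwists} ($z_{n+1}-z_n=\omega[D]$), and conclude by linearity of $x_N$ on the cone over a face of the norm ball eventually containing all the $z_n$ and the direction $[D]$. (The paper packages that last step as $x(z_{n+1})-x(z_n)=x(z_{n+1}-z_n)$ for $n\gg 0$; your explicit appeal to piecewise linearity is the justification it implicitly relies on, so you are if anything more careful there.) Your identifications $\wind_{c_n}(\hat z_n)=\omega$ and $x(\hat z_n)=2g(K_n)-1$, and the meridian/cable-space degenerate cases, also match the paper.

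There is, however, one concrete error: the claim that ``a reducible $N$ would force $\omega=0$.'' That is false. A reducing sphere for $N$ bounds a ball in $Y$ containing $K$, $c$, or both; only a sphere separating $K$ from $c$ forces $\lk(K,c)=0$. If $\omega>0$, a reducing sphere must instead bound a ball containing all of $K\cup c$ (take, e.g., a link of linking number $2$ inside a ball in an irreducible rational homology sphere $Y\neq S^3$: then $N$ is reducible while $\omega=2$). So you cannot dispose of this case via Gabai's \cite[Corollary 2.4]{G2}. The paper's fix is to observe that in this situation $N=N'\# Y$ with $N'$ the irreducible exterior of $K\cup c$ in $S^3$, and since the $Y$--summand does not affect $g(K_n)$ one reruns the whole argument for $N'$. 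Your proposal needs this replacement; otherwise the reduction to the hypotheses of Theorem~\ref{thm:main} is incomplete.
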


\begin{proof}	
	Among discs bounded by $c$ in $Y$, let $\hat{D}$ be one for which $|K \cap \hat{D}| = p>0$ is minimized and set $D = \hat{D} \cap N$.  Note that the minimality implies the punctured disc $D$ is incompressible and $\bdry$--incompressible. Moreover $\bdry D$ consists of one longitude of $c$ and $p$ meridional curves of $K$.  In particular, if $p=1$ then $D$ is an annulus so that $x([D]) = 0$ and $c$ is a meridian of $K$.  Hence $K=K_n$ for all integers $n$ so the genus is constant and the theorem holds.    Thus we assume $p\geq 2$.  This further implies that $N$ is not the product of a torus and an interval.
	
	If $N$ is a cable space, since $D$ is not an annulus but is a properly embedded, non-separating, incompressible and $\bdry$--incompressible surface, it must be a fiber in a fibration of $N$ over $S^1$.  (All classes in $H_2(N, \bdry N;\Z)$ other than multiples of the class of the cabling annulus are represented by fibers.)  Therefore because $\bdry D$ consists of a longitude of $c$ and meridians of $K$, it follows that $Y \cong S^3$ and $K$ is a torus knot in the solid torus exterior of the unknot $c$.  In particular, this means that for some  integer $q$ coprime to $p=|K \cap \hat{D}|$, the knot $K_n$ is the $(p,q+np)$--torus knot and the theorem holds.  Therefore we may assume that $N$ is not a cable space.
	
	If $N$ is reducible, then there is a sphere in $N$ that does not bound a ball in $N$ and yet must bound a ball in $Y$ that contains either $K$ or $c$. If this sphere separates the two components of $\bdry N$ then it separates $K$ and $c$ in $Y$ implying that $\lk(K,c)=0$, contrary to assumption.  Thus $K \cup c$ must be contained in a ball in $Y$ and may be viewed as being contained in an $S^3$ summand of $Y$. Thus $N=N'\#Y$ where $N'$ is the irreducible exterior of $K \cup c$ in $S^3$.   Since the summand will not affect the genera of the knots $K_n$, we may run the argument for $K \cup c$ in $S^3$.  Thus we may assume $N$ is irreducible.

	Let $\hat{z}_n$ be the homology class of an oriented Seifert surface for $K_n$ in $Y-\nbhd(K_n)$ for which $x(\hat{z_n}) = 2g(K_n)-1$.  Then set $z_n = \rho_{-1/n}(\hat{z}_n)$  to be the homology class of the restriction of the Seifert surface to $N = Y -\nbhd(K \cup c)$.  By Theorem~\ref{thm:main}, there is a finite set of integers $\calR$ such that 
	\[x(z_n) = x(\hat{z}_n) + \wind_{K_n}(\hat{z}_n) \]
	if $n \not \in \calR$.  Since $\omega = \wind_{K_n}(\hat{z}_n)$ for all integers $n$ and $2g(K_n)-1 = x(\hat{z}_n)$,  then 
	when $n \gg 0$ we have
	\[2(g(K_{n+1}) - g(K_n)) = x(z_{n+1}) - x(z_n) = x(z_{n+1} - z_n).\]
	By Lemma~\ref{lem:homologyoftwists}, $z_{n+1}-z_n = \omega [D]$ for all integers $n$.
	Hence for $n \gg 0$, $2(g(K_{n+1}) - g(K_n)) =  \omega x([D])$.  
	Therefore when $n$ is sufficiently large, $2g(K_n) = 2G + n \omega x([D])$ for some constant $G$ as desired.
\end{proof}

\begin{remark}
At the expense of having to reckon with multiple homology classes of Seifert surfaces, one should be able to prove Theorem~\ref{thm:twistfamilygrowth} without the hypothesis that $Y$ is a rational homology sphere.
\end{remark}

\begin{remark}
One ought to be able to prove Theorem~\ref{thm:main} and
Theorem~\ref{thm:twistfamilygrowth} using link Floer Homology.
\end{remark}

\section{Acknowledgements}

The authors would like to thank Colby College and University of Miami for their hospitality during this project and K. Motegi for inspiring conversations. This work was partially supported by a grant from the Simons Foundation (\#209184 to Kenneth L.\ Baker).

\begin{bibdiv}
\begin{biblist}
\bibselect{ThurstonNormDehnFillings-bib}
\end{biblist}
\end{bibdiv}

\end{document}